\setlist[description]{font=\normalfont\itshape\textbullet\space}
\renewcommand{\paragraph}[1]{\vspace{6pt} \noindent \textbf{#1}\xspace}
\theoremstyle{plain}
\newtheorem{theorem}{Theorem}[section]
\newtheorem{lemma}[theorem]{Lemma}
\newtheorem{observation}[theorem]{Observation}
\newtheorem{proposition}[theorem]{Proposition}
\newtheorem{claim}[theorem]{Claim}
\newtheorem{fact}[theorem]{Fact}
\theoremstyle{definition}
\newtheorem{definition}[theorem]{Definition}
\newcommand{\GL}{\mathrm{GL}}
\newcommand{\F}{\mathbb{F}}
\newcommand{\D}{\mathrm{D}}
\newcommand{\cat}[1]{\mathsf{#1}}
\newcommand{\catgraphpull}{\cat{Graph}_{\leftarrow}}
\newcommand{\BLTfunctor}{\mathsf{BLT}}
\newcommand{\catgroup}{\cat{Group}}
\newcommand{\N}{\mathbb{N}}
\newcommand{\rk}{\mathrm{rk}}
\newcommand{\im}{\mathrm{im}}
\newcommand{\sym}{\mathrm{Sym}}
\newcommand{\M}{\mathrm{M}}
\renewcommand{\S}{\mathrm{S}}
\newcommand{\spa}[1]{\mathcal{#1}}
\newcommand{\cA}{\spa{A}}
\newcommand{\cB}{\spa{B}}
\newcommand{\bK}{\mathbf{K}}
\newcommand{\bA}{\mathbf{A}}
\renewcommand{\ge}{\geqslant}
\newcommand{\too}%
{\xrightarrow{\text{\raisebox{-3pt}{$\sim$}}\,}}
\title{
On the Baer-Lov\'asz-Tutte construction 
of groups from graphs: isomorphism types and homomorphism notions
}
\author{
Xiaoyu He
\footnote{Institute of Computing Technology, Chinese Academy of Sciences, China, 
and
University of Chinese Academy of Sciences, China. \tt{hexiaoyu18s@ict.ac.cn}}
\and
Youming Qiao
\footnote{Centre for Quantum Software and Information, University of 
Technology Sydney. \tt{jimmyqiao86@gmail.com}}
}
\date{\today}
\begin{document}

%\pagenumbering{gobble}  % no page number on first page

\maketitle

\begin{abstract}
Let $p$ be an odd prime. From a simple undirected 
graph $G$, through the classical procedures of Baer (Trans. Am. Math. Soc., 1938), 
Tutte (J. Lond. Math. Soc., 1947) and Lov\'asz (B. Braz. Math. Soc., 1989), there 
is a $p$-group $P_G$ of class $2$ and exponent $p$ that is naturally associated 
with $G$. Our first result is to show that this construction of groups from graphs 
respects 
isomorphism types. That is, 
given two graphs $G$ and $H$, $G$ and $H$ are isomorphic as graphs if and only if 
$P_G$ and $P_H$ are isomorphic as groups. Our second contribution is a new 
homomorphism notion for graphs. Based on this notion, a category of 
graphs can be defined, and the 
Baer-Lov\'asz-Tutte construction naturally leads to a functor from this category 
of graphs to the category of groups. 
\end{abstract}

\section{Introduction}

\subsection{The results}

In this note we study some basic questions regarding the following construction of 
finite groups from simple undirected graphs, following the classical works of 
Baer \cite{Bae38}, Tutte \cite{Tut47}, and Lov\'asz \cite{Lov79}.

\paragraph{Notations.} To introduce this construction, we set up some notations 
first. 
%For a finite set $V$, we use $\binom{V}{2}$ to denote the set of size-$2$ 
%subsets of $V$. We shall restrict to consider simple undirected graphs. A graph 
%$G=(V, E)$ consists of the vertex set $V$ and the edge set $E\subseteq 
%\binom{V}{2}$. 
For $n\in\N$, let 
$[n]=\{1, \dots, n\}$. The set of size-$2$ subsets of $[n]$ is denoted as 
$\binom{[n]}{2}$. The natural total order of $[n]$ induces the lexicographic 
order 
on $[n]\times [n]$. In this introduction, to ease the exposition, we shall 
only 
consider 
graphs with vertext sets being $[n]$. Therefore, a 
simple undirected graph with the vertex set $[n]$ is a subset of 
$\binom{[n]}{2}$. 
%
%Let $\F$ be a field. Let $V$ be a finite set. We 
%use $\F^V$ to denote the $\F$-vector space of functions from $V$ to $\F$. An 
%alternating bilinear form on $\F^V$ is 

For a field $\F$, $\F^n$ is the linear space consisting of length-$n$ 
column vectors over $\F$. We use $\M(\ell\times n, \F)$ to denote the linear space 
of $\ell 
\times n$ 
matrices over $\F$, and set $\M(n, \F):=\M(n\times n, \F)$. A matrix $A\in \M(n, 
\F)$ 
is alternating, if for any $v\in \F^n$, $v^tAv=0$. For 
$\{i,j\}\in \binom{[n]}{2}$, $i<j$, an elementary alternating matrix $A_{i,j}\in 
\M(n, \F)$ is the $n\times n$ matrix with the $(i,j)$th entry being $1$, the 
$(j,i)$th entry being $-1$, and the rest entries being $0$. The linear space of 
$n\times n$ alternating matrices over $\F$ is denoted by $\Lambda(n, \F)$. The 
general linear group of degree $n$ over $\F$ is denoted by $\GL(n, 
\F)$. 

\paragraph{The Baer-Lov\'asz-Tutte construction.} We now introduce the 
Baer-Lov\'asz-Tutte construction of groups from 
graphs with vertex sets being $[n]$. See Section~\ref{sec:hom} for a description 
of the construction that works for any finite set 
without specifying some total order.
\begin{enumerate}
\item\label{item:step1} In the first step, we construct an alternating 
bilinear map from a graph, 
following ideas traced back to Tutte \cite{Tut47} and Lov\'asz \cite{Lov79}. 
This steps 
works for any 
field $\F$. Let $G\subseteq \binom{[n]}{2}$ be a graph of size $m$. Suppose 
$G=\{\{i_1, j_1\}, \dots, \{i_m, j_m\}\}$, where for $k\in[m]$, $i_k<j_k$ and for 
$1\leq k<k'\leq m$, $(i_k, j_k)$ is less than $(i_{k'}, j_{k'})$ in the 
lexicographic order. For $k\in[m]$, let $A_k$ be the $n\times n$ elementary 
alternating matrix 
$A_{i_k, j_k}$ over $\F$, and set 
$\bA_G=(A_1, \dots, A_m)\in\Lambda(n, \F)^m$. Then $\bA_G$ defines an alternating 
bilinear map 
$\phi_G:\F^n\times\F^n\to\F^m$, by $\phi_G(v, u)=(v^tA_1u, \dots, v^tA_mu)^t$.
\item\label{item:step2} In the second step, we fix an odd prime $p$, take 
$\phi_G:\F_p^n\times 
\F_p^n\to\F_p^m$ from the last step over $\F_p$, and apply Baer's construction 
\cite{Bae38}
to $\phi_G$ to obtain $P_G$ which is a $p$-group of class $2$ and exponent $p$. 
Baer's 
construction actually works for any 
alternating bilinear map $\phi$ over $\F_p$. Specifically, let 
$\phi$ be 
an alternating bilinear map 
$\phi:\F_p^n\times\F_p^n\to\F_p^m$. A $p$-group of class $2$ and exponent $p$, 
denoted as $P_\phi$, can be constructed from $\phi$ as follows. The set of group 
elements of $P_\phi$ is $\F_p^n\times\F_p^m$. The group operation $\circ$ of 
$P_\phi$ is 
\begin{equation}\label{eq:gp_def}
(v_1, u_1)\circ (v_2, u_2) := (v_1+v_2, u_1+u_2+\frac{1}{2}\cdot \phi(v_1, v_2)).
\end{equation}
\end{enumerate}

%let $\cA_G$ be the linear span of 
%$\{A_{i,j} : \{i,j\}\in G\}$, called the alternating matrix space associated with 
%$G$. Take an ordered basis $\bA=(A_1, \dots, A_m)$ of $\cA_G$, which defined an 
%alternating bilinear map $\phi_\bA:\F^n\times\F^n\to\F^m$, by $\phi_\bA(v, 
%u)=(v^tA_1u, \dots, v^tA_mu)^t$. 

%Combining the above two steps, we can construct a $p$-group of class $2$ and 
%exponent $p$ from a graph $G$. We shall refer to the above procedure as the 
%Baer-Lo\'vasz-Tutte construction. 
We shall review the literature related to this construction in 
Section~\ref{subsec:related}. In this note we study two natural questions 
regarding isomorphisms and homomorphisms in this construction.

\paragraph{A result about isomorphisms.} Given two graphs $G, 
H\subseteq\binom{[n]}{2}$, the Baer-Lov\'asz-Tutte construction produce two 
$p$-groups of class $2$ and exponent $p$, 
$P_G$ and $P_H$. Clearly, if $G$ and $H$ are isomorphic, then $P_G$ and $P_H$ are 
isomorphic. Interestingly, the converse also holds. 
\begin{theorem}\label{thm:iso}
%Let $G$ and $H$ be two graphs, and let $P_G$ and $P_H$ be $p$-groups of class $2$ 
%and exponent $p$ constructed from $G$ and $H$ by the Baer-Lov\'asz-Tutte 
%procedure. 
Let $G$, $H$, $P_G$, and $P_H$ be as above. 
Then $G$ and $H$ are isomorphic as graphs if and only if $P_G$ and 
$P_H$ are isomorphic as groups. 
\end{theorem}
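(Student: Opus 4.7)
The forward direction $G \cong H \Rightarrow P_G \cong P_H$ is immediate: a graph isomorphism lifts to a permutation matrix inducing a pseudo-isometry of the alternating bilinear maps $\phi_G, \phi_H$, hence a group isomorphism via the Baer construction. For the converse, my plan proceeds in three stages. \emph{First (Baer correspondence):} for a class-$2$, exponent-$p$ group of the form $P_\phi$ with $\phi$ non-degenerate (trivial radical and spanning image), the isomorphism class of $P_\phi$ is determined by the pseudo-isometry class of $\phi$, i.e.\ $P_\phi \cong P_\psi$ iff there exist $T \in \GL(n, \F_p)$ and $S \in \GL(m, \F_p)$ with $\psi(Tv, Tu) = S\phi(v, u)$. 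For $\phi_G$ the image always equals $\F_p^m$ while the radical is the span $R_G$ of the isolated-vertex coordinates, and the abelian group $Z(P_G)/[P_G, P_G] \cong \F_p^{|R_G|}$ is a canonical group-theoretic invariant encoding $|R_G|$; so any isomorphism $P_G \cong P_H$ forces $|R_G| = |R_H|$ and reduces to graphs without isolated vertices, where Baer applies directly.

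\emph{Second (matrix-space reformulation):} the pseudo-isometry of $\phi_G, \phi_H$ is equivalent to the existence of $T \in \GL(n, \F_p)$ with $T^t \mathcal{A}_H T = \mathcal{A}_G$, where $\mathcal{A}_G := \linspan(A_{i,j} : \{i,j\} \in G) \subseteq \Lambda(n, \F_p)$. The task thus reduces to showing that the $\GL(n, \F_p)$-congruence orbit of $\mathcal{A}_G$ determines $G$ up to graph isomorphism. The central observation is that for each vertex $i \in V(G)$, the \emph{star subspace} $V_i := \linspan(A_{i,j} : j \in N_G(i))$ is a constant-rank-$2$ subspace of dimension $\deg_G(i)$, since every nonzero element equals $e_i \wedge v$ for some nonzero $v \in \linspan(e_j : j \in N_G(i))$; furthermore $\{i,j\} \in G$ iff $V_i \cap V_j$ is a rank-$2$ line, necessarily $\F_p \cdot A_{i,j}$. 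Hence once the collection $\{V_i\}_{i \in V(G)}$ is recovered intrinsically from $\mathcal{A}_G$, the graph $G$ is read off from the intersection pattern.

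\emph{Third (main step):} characterize $\{V_i\}$ intrinsically in $\mathcal{A}_G$, for instance as the maximal constant-rank-$2$ subspaces, or as the linear components of the rank-$2$ subvariety of $\mathbb{P}(\mathcal{A}_G)$. The main obstacle I anticipate is that for highly symmetric graphs such as $K_n$ or $K_{r,s}$ (already visible for $K_{2,2} = C_4$), additional ``phantom'' maximal constant-rank-$2$ subspaces $V_u \cap \mathcal{A}_G$ appear at non-coordinate $u$, parametrized by projective families of virtual vertices, and the pseudo-isometry group acts transitively on the enlarged family, mixing real and virtual stars. To overcome this I would try either a sharper intrinsic invariant separating real from phantom stars --- for example via finer projective-geometric structure of the rank-$2$ variety, which even among $4$-vertex $4$-edge graphs distinguishes the smooth quadric $\mathbb{P}^1 \times \mathbb{P}^1$ arising from $C_4$ from the union of planes arising from the paw graph --- or a direct case analysis of the symmetric graphs whose pseudo-isometry group strictly exceeds their graph automorphism group. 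In either case, once $\{V_i\}$ (or an iso-equivalent family) is pinned down, the intersection recipe yields $G \cong H$.
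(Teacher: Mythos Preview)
Your first two stages are sound and match the paper's reduction: Baer's correspondence reduces group isomorphism to pseudo-isometry of $\phi_G,\phi_H$, equivalently to the existence of $T\in\GL(n,\F)$ with $T^t\cA_H T=\cA_G$. (The paper does not separately treat isolated vertices, but your handling of them is fine.)

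The gap is in your third stage, and you have essentially identified it yourself. The characterization of the star subspaces $V_i$ as maximal constant-rank-$2$ subspaces of $\cA_G$ is \emph{not} intrinsic: already for $C_4$ the rank-$2$ locus in $\mathbb{P}(\cA_G)$ is a smooth quadric surface, whose two rulings give a $\mathbb{P}^1$-family of such subspaces in which the four genuine $V_i$ sit as four unmarked points. Your suggested remedies --- a ``sharper intrinsic invariant'' or a ``direct case analysis of the symmetric graphs'' --- are not carried out, and it is far from clear that either can be made to work uniformly: the phenomenon is not confined to a few sporadic graphs but recurs whenever $G$ contains induced $C_4$'s or complete bipartite pieces, so a case analysis would have to control how these local ambiguities interact globally. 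As written, the proposal is a reduction to an open-ended classification problem rather than a proof.

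The paper takes a completely different route that sidesteps the need to recover the $V_i$. Starting from $T^t\cA_G T=\cA_H$, it extracts the elementary constraint that for every $\{i,j\}\in G$ and $\{k,l\}\notin H$ the $2\times 2$ minor $T(\{i,j\},\{k,l\})$ vanishes. It then argues by contradiction: if $G\not\cong H$, one builds a finite family of matrices $M\preceq T$ (agreeing with $T$ on their support), each block-structured along a partition of $[n]$ into $G$-connected parts whose $\sigma$-images are $\overline H$-connected, with each block of rank $1$; the maximal such matrices partition the nonzero terms of the Leibniz expansion of $\det T$, forcing $\det T=\sum\det T_i=0$ and contradicting invertibility. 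This determinantal/combinatorial argument is short and uniform, and is what you are missing.
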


To prove Theorem~\ref{thm:iso}, we review the 
notion of 
isomorphisms for alternating bilinear maps. 
Let $\phi, \psi:\F^n\times\F^n\to\F^m$ be two alternating bilinear maps. Then 
$\phi$ and $\psi$ are \emph{isomorphic}\footnote{This isomorphism notion of 
alternating 
bilinear maps is sometimes called pseudo-isometry in the literature 
\cite{Wil09a}.}, if there exist $C\in\GL(n, \F)$ and $D\in\GL(m, 
\F)$, such that for any $u, v\in \F^n$, we have $D(\phi(u, v))=\psi(C(u), C(v))$. 
Let $\phi$, $\psi$, $P_\phi$ and $P_\psi$ be from Step~\ref{item:step2}. It is 
well-known that $\phi$ and $\psi$ are isomorphic if and only if $P_\phi$ and 
$P_\psi$ are isomorphic
(see e.g. \cite{Wil09a}). 

%That is, given two alternating bilinear maps $\phi, 
%\psi:\F_p^n\times\F_p^n\to\F_p^m$, construct $P_\phi$ and $P_\psi$ using Baer's 
%construction. Then we have $\phi$ and $\psi$ are isomorphic if and only if 
%$P_\phi$ and $P_\psi$ are isomorphic. 

Therefore, the crux of Theorem~\ref{thm:iso} lies in the construction of 
alternating bilinear maps from graphs. Given two graphs $G, 
H\subseteq\binom{[n]}{2}$, construct $\phi_G$ and $\phi_H$ as in 
Step~\ref{item:step1}. It is 
obvious that if $G$ and $H$ are isomorphic, then $\phi_G$ and $\phi_H$ are 
isomorphic. The converse direction is the elusive one, and showing its 
correctness is the main 
technical result in this note. 
%
%As alternating bilinear maps serve as the intermediate object in the 
%Baer-Lov\'asz-Tutte construction, it is natural to prove Theorem~\ref{thm:iso} by 
%first 
%
%Let $G, H\subseteq\binom{[n]}{2}$, and let $\bA$ and $\bB$ be ordered bases of 
%$\cA_G$ and $\cA_H$, respectively. It is clear that if $G$ and $H$ are 
%isomorphic, 
%then $\phi_{\bA}$ and $\phi_{\bB}$ are isomorphic, regardless of the choices of 
%ordered bases of $\cA_G$ and $\cA_H$. This allows us to write $\phi_G$ as an 
%alternating bilinear map to represent any $\phi_{\bA}$ where $\bA$ is an ordered 
%basis of $\cA_G$. 
%
%Let $\phi, \psi:\F_p^n\times\F_p^n\to\F_p^m$ be two alternating bilinear maps. 
%Then it is well-known that $\phi$ and $\psi$ are isomorphic if and only if 
%$P_\phi$ and $P_\psi$ are isomorphic. Indeed, Baer's construction 
%
%It follows that starting from two graphs $G$ 
%and $H$, if $G$ and $H$ are isomorphic as graphs, then $P_{\phi_G}$ and 
%$P_{\phi_H}$ are isomorphic as groups. However, it is not a priori clear whether 
%the inverse direction holds. Our first result shows that this is indeed the case. 
\begin{proposition}\label{prop:main}
Let $G, H\subseteq \binom{[n]}{2}$ be two graphs, and let $\phi_G, 
\phi_H:\F^n\times\F^n\to\F^m$ be two alternating bilinear maps constructed from 
$G$ and $H$ in Step~\ref{item:step1}. Then $G$ and $H$ are isomorphic if and only 
if $\phi_G$ and 
$\phi_H$ are isomorphic. 
\end{proposition}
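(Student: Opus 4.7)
The forward direction is immediate: a graph isomorphism $\sigma \colon V(G) \to V(H)$ lifts to the permutation matrix $C = P_\sigma \in \GL(n, \F)$ satisfying $C^t A_{i,j} C = \pm A_{\sigma(i), \sigma(j)}$, which together with the induced permutation $D \in \GL(m, \F)$ on edge indices realizes $\phi_G \cong \phi_H$.

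For the converse, suppose $(C, D)$ realizes $\phi_G \cong \phi_H$. Writing $D\phi_G(u,v) = \phi_H(Cu, Cv)$ coordinatewise gives the matrix-space relation $C^t \cA_H C = \cA_G$, where $\cA_G := \linspan_\F\{A_{i,j} : \{i,j\} \in G\} \subseteq \Lambda(n, \F)$. The plan is to extract a graph isomorphism from this relation in three stages. First, I would reduce to the case where $G, H$ are both connected and have no isolated vertices. The radical $\bigcap_{A \in \cA_G} \ker A = \linspan\{e_i : i \text{ isolated in } G\}$ has $\GL$-invariant dimension equal to the number of isolated vertices, so quotienting eliminates them. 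A direct-sum decomposition $\F^n = V_1 \oplus V_2$ compatible with $\cA_G \subseteq \Lambda(V_1) \oplus \Lambda(V_2)$ corresponds to a splitting of $G$ along connected components, and a Krull--Schmidt-type uniqueness for alternating matrix spaces reduces us to the connected case.

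Second, for connected $G, H$ with no isolated vertices, I would recover the vertex set intrinsically. For each nonzero $v \in \F^n$, define the \emph{link} $L(v) := \{w \in \F^n : v \wedge w \in \cA_G\}$, a subspace containing $\langle v \rangle$. This assignment is $\GL$-equivariant: under $C^t \cA_H C = \cA_G$, one checks $L_H(C^{-t} v) = C^{-t} L_G(v)$. At $v = e_i$ a standard basis vector, $L(e_i) = \linspan(e_i) + \linspan\{e_j : \{i,j\} \in G\}$, so $\dim L(e_i) = 1 + \deg_G(i)$. Using $\dim L(\cdot)$ together with iterated invariants built from $\cA_G$-orbits, the goal is to characterize the set of vertex lines $\{\langle e_i \rangle\}$ up to the $\GL(n, \F)$-action: vertices of maximum degree are picked out first as extremizers of $\dim L$, then ``peeled off'' by passing to a suitable quotient of the matrix-space structure, and lower-degree vertices (including leaves) are recovered inductively from the residual $\cA_G$-data. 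Once $n$ vertex-line representatives $v_1, \dots, v_n$ are fixed, the edges are read off via $\{i, j\} \in G \iff v_i \wedge v_j \in \cA_G$; transporting through $C$ yields the desired graph isomorphism $G \to H$.

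The main obstacle is the intrinsic characterization of all vertex lines in the second stage. The trouble arises for graphs whose automorphism groups act nontrivially on low-degree vertices (e.g., stars with interchangeable leaves, paths, or caterpillars): individual leaf lines are \emph{not} pinned down by $\dim L(\cdot)$ alone, since a continuum of non-basis vectors can match the leaf's invariants. The resolution is to characterize such leaves collectively as a $\GL$-invariant subvariety of $\F^n$ (e.g., via the locus of $v$ where $L(v)$ is contained in the join of previously identified vertex lines), and to argue that any coherent choice of $n$ representatives within the correct equivalence class recovers $G$ up to isomorphism. Verifying that the inductive peeling procedure is well-defined across every possible connected topology---independent of arbitrary choices among symmetric vertices---is the technical heart of the argument.
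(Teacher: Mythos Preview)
Your proposal is a plan, not a proof, and the gap sits exactly where you say it does. You never specify the peeling procedure, never prove it terminates with a valid vertex basis, and never show that the choices made along the way yield isomorphic outputs. Without that last point the argument is circular: having found some basis $v_1,\dots,v_n$ in which $\cA_G$ takes the form $\cA_{G'}$, you have only established $\cA_G\cong\cA_{G'}$---the hypothesis, not the conclusion $G\cong G'$. The underlying obstacle is real: vertex lines are \emph{not} invariants of $\cA_G$. Already for the path $P_3$ on $\{1,2,3\}$, the matrix $T\in\GL(3,\F)$ with $Te_1=e_1+e_3$, $Te_2=e_2$, $Te_3=e_3$ satisfies $T^t\cA_{P_3}T=\cA_{P_3}$ yet does not permute coordinate lines, so the isometry group of $\cA_G$ is strictly larger than the monomial group and no equivariant recipe can single out the standard $\langle e_i\rangle$. (The Krull--Schmidt uniqueness you invoke for the connected-component reduction is also nontrivial and not supplied.)

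The paper's argument avoids reconstruction entirely. It first records that $\cA_H=T^t\cA_GT$ is equivalent to the vanishing of every $2\times 2$ minor of $T$ with row indices an edge of $G$ and column indices a non-edge of $H$. It then argues by contradiction: assuming $G\not\cong H$, it builds a finite set $\bK(G,H,T)$ of matrices $M\preceq T$, each block-diagonal after a permutation with rank-$1$ blocks indexed by a partition of $[n]$ into parts connected in $G$ whose $\sigma$-images are connected in $\overline H$, and shows that the maximal elements $T_1,\dots,T_s$ of $\bK(G,H,T)$ satisfy $\det T=\sum_i\det T_i$. Since every such $T_i$ has a rank-$1$ block of size at least $2$, each $\det T_i=0$, forcing $\det T=0$ and contradicting $T\in\GL(n,\F)$. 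No attempt is made to locate vertices inside $\F^n$.
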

%Note that Proposition~\ref{prop:main} holds over any field. 

\paragraph{A notion of graph homomorphisms.} 
As pointed out by J. B. Wilson in \cite[Sec. 3.3]{Wil09a}, Baer's construction 
naturally leads to a functor from the category of alternating bilinear maps over 
$\F_p$ to 
the category of $p$-groups of class $2$ and exponent $p$. To define a category of 
alternating bilinear maps, let us review 
the notion of homomorphisms for alternating bilinear maps. Let 
$\phi:\F_p^n\times\F_p^n\to\F_p^m$ and $\psi:\F_p^{n'}\times\F_p^{n'}\to\F_p^{m'}$ 
be two alternating bilinear maps. Then a \emph{homomorphism} from $\phi$ to $\psi$ 
is $(C, D)\in \M(n'\times n, \F)\times\M(m'\times m, \F)$, such that for any $u, 
v\in \F_p^n$, we have $D(\phi(u,v))=\psi(C(u), C(v))$. 
%Endowed with this 
%homomorphism notion, the construction in Step~\ref{item:step2} naturally leads to 
%a functor 

%This homomorphism notion 
%leads to a category of alternating bilinear maps over $\F_p$ which is 
%functorially 
%equivalent to the category of $p$-groups of class $2$ and exponent $p$. 

%It is then natural to wonder about if there is a homomorphism notion of graphs 
%which would make a category of graphs, such that the Baer-Lov\'asz-Tutte 
%construction is a functor from this category of graphs to the category of groups 
%through the category of alternating bilinear maps. 

Our next goal is to establish a category of graphs which relates to the category 
of groups via the category of alternating bilinear maps. This leads us to define 
the 
following notion of graph homomorphisms.
\begin{definition}\label{def:main}
Let $G\subseteq\binom{[n]}{2}$ and $H\subseteq\binom{[n']}{2}$ be two graphs. An 
injective partial function $f:[n]\to[n']$ is a \emph{pullback homomorphism} from 
$G$ to 
$H$, if 
$\forall \{i,j\}\in \binom{[n]}{2}$, $\{f(i), f(j)\}\in H\Rightarrow \{i,j\}\in 
G$. 
\end{definition} 
Note that $\{f(i), f(j)\}\in H$ implies that $f(i)$ and $f(j)$ are both defined in 
$f$. 

The above definition has the following graph-theoretic interpretation. 
For a partial function $f:X\to Y$, we call $X$ the domain of $f$ and $Y$ the 
codomain of $f$. We use $\im(f):=f(X)\subseteq Y$ to denote the image of $f$, and
$\D(f)=\{x\in X : f(x)\text{ is defined}\}$ to denote the domain of definition of 
$f$. Then a homomorphism $f$ from $G$ to $H$ just says that an isomorphic copy of 
the induced subgraph $H[\im(f)]$ is contained in $G[\D(f)]$ as a subgraph. From 
this, it is easy to verify that a composition of two homomorphisms is again a 
homomorphism; see Observation~\ref{fact:hom}.

This pullback homomorphism notion differs from the well-established notion of 
homomorphisms of graphs (see e.g. \cite{HN04}), which defines a homomorphism from 
$G$ to $H$ as a (total) function $g:[n]\to [n']$ satisfying $\{i,j\}\in 
G\Rightarrow\{f(i), f(j)\}\in H$. We shall call such homomorphisms  
\emph{pushforward homomorphisms}.
% to distinguish these two notions of homomorphism. 

The 
alert reader may note that, to naturally arrive at pullback homomorphisms, we 
should 
view $\{i',j'\}\in \binom{[n']}{2}$ as some function-like object. Indeed, one can 
identify $S=\{i',j'\}$ with its indicator function $I_S:[n']\to\{0, 1\}$, by 
$I_S(k')=1$ if and only if $k'=i'$ or $k'=j'$. Then
$f:[n]\to[n']$ naturally pulls $I_S$ back to $I_S\circ 
f:[n]\to\{0,1\}$. So alternatively, we can also define a partial injective 
function $f:[n]\to 
[n']$ to be a pullback homomorphism if it sends $\{I_S : S\in H, S\subseteq 
\im(f)\}$ to a subset of $\{I_T : T\in G\}$. This provides one explanation for 
imposing the injectivity condition, which ensures that if $S\subseteq \im(f)$, 
then 
$I_S\circ f=I_T$ for some $T\subseteq \binom{[n]}{2}$. 
%the pushforward homomorphism notion comes naturally when we view $\{i, 
%j\}\in \binom{[n]}{2}$ as a subset, while the pullback homomorphism notion comes 
%naturally when we view $\{i,j\}$ as a 

We shall examine the notion of pullback homomorphisms more closely in 
Section~\ref{sec:hom}. For now we just provide some brief 
remarks. 
First, with pullback homomorphisms and some appropriate
measure to take care of graphs with vertex 
sets not necessarily $[n]$, we can define a category of graphs, and the 
Baer-Lov\'asz-Tutte construction naturally leads to a functor from this category 
of graphs 
to the category of groups. 
%Second, it is possible to use any partial function 
%instead of injective ones in Definition~\ref{def:main}. We still think that 
%imposing injectivity is more natural. For example, using notations from 
%the last paragraphs, 
Second, several classical algorithmic problems for graphs can be formulated 
naturally using pullback homomorphisms.
These will be discussed in more details in Section~\ref{sec:hom}. We leave a more 
thorough study into this notion to future works. 
%
%It follows that the composition of two 
%homomorphisms is 
%also a homomorpism, . Third, with this homomorphism notion, we have a category of 
%graphs, and the Baer-Lov\'asz-Tutte procedure becomes a functor from this 
%category 
%of graphs to the category of groups. 

\subsection{Related works}\label{subsec:related}

\paragraph{Works related to the Baer-Lov\'asz-Tutte construction.} Let us explain 
the contexts of some works related to this construction. 
%provide some 
%context for this construction. 

Tutte used the following linear algebraic 
construction in his study of perfect matchings on graphs \cite{Tut47}. That is, 
given 
$G\subseteq \binom{[n]}{2}$, Tutte constructed an alternating matrix 
$A=(a_{i,j})$, such that for $1\leq i<j\leq n$, $a_{i,j}=x_{i,j}$ if $\{i,j\}\in 
G$, and $a_{i,j}=0$ otherwise, where $x_{i,j}$'s are independent variables. (Then 
$a_{i,j}$ for $i=j$ and $i>j$ are set by the 
alternating condition.) This alternating matrix can be easily interpreted 
as a 
linear space of alternating matrices, spanned by elementary alternating matrices 
$A_{i,j}$, $\{i,j\}\in G$. Lov\'asz explored Tutte's construction from the 
perspective of  
randomised algorithms \cite{Lov79}, and studied 
linear spaces of matrices in other topics in combinatorics \cite{Lov89}. 

Baer studied central extensions of abelian groups by abelian groups, and presented 
the construction of $p$-groups of class $2$ and exponent $p$ from alternating 
bilinear maps in 
\cite{Bae38}. Given a $p$-groups of class $2$ and exponent $p$, one can take the 
commutator bracket to obtain an alternating bilinear map. Taking commutators and 
Baer's construction form the backbone of a pair of functors between alternating 
bilinear maps over $\F_p$ and $p$-groups of class $2$ and exponent $p$, as shown 
in \cite{Wil09a}.

\paragraph{Alternating matrix spaces and the related works.} In 
Step~\ref{item:step1}, we defined $\bA_G\in\Lambda(n, \F)^m$ from a graph $G$. 
Following Lov\'asz \cite{Lov79}, set $\cA_G$ to be the linear span of matrices in 
$\bA_G$, which is a subspace of $\Lambda(n, \F)$. The alternating matrix space 
$\cA_G$ is closely related to the alternating bilinear map 
$\phi_G:\F^n\times\F^n\to\F^m$, and this perspective is easier to work with when 
making
connections to graphs. 

For example, Tutte and Lov\'asz noted 
that the matching number of $G$ 
%$\mu(G)$, 
is equal 
to one half of the maximum rank over matrices in $\cA_G$.\footnote{This is 
straightforward to 
see if the underlying field $\F$ is large enough. If $\F$ is small, it follows 
e.g. as a consequence of the linear matroid parity theorem; cf. the discussion 
after \cite[Theorem 4]{Lov89}.}
Recently in \cite{BCG+19}, it is shown
that the independence number of $G$
%, $\alpha(G)$, 
is 
equal to the 
maximum 
dimension over the totally isotropic spaces\footnote{A subspace $U\leq 
\F^n$ is a totally
isotropic space of $\cA\leq \Lambda(n, \F)$, if for any $u, u'\in U$, and any 
$A\in \cA$, $u^tAu'=0$.} of $\cA_G$. They also showed %\yinan{Used to be: show} 
that the 
chromatic number of $G$ %, $\chi(G)$, 
is equal to the minimum $c$ such that there 
exists a direct sum decomposition of $\F^n$ into $c$ non-trivial totally isotropic 
spaces 
for $\cA_G$. In 
\cite{LQ19}, the 
vertex and edge connectivities of a graph $G$ are 
shown to be equal to certain parameters related to orthogonal 
decompositions\footnote{A direct sum decomposition $\F^n=U\oplus V$ is orthogonal 
with 
respect to $\cA\leq\Lambda(n, \F)$, if for any $u\in U, v\in V$, and $A\in\cA$, 
$u^tAv=0$. 
This definition is probably more natural from the alternating bilinear 
map viewpoint, but the two parameters in \cite{LQ19} are arguably easier to 
motivate from the alternating matrix space perspective.
} of 
$\cA_G$.

Given two alternating matrix spaces $\cA, \cB\leq \Lambda(n, \F)$, $\cA$ and $\cB$ 
are isomorphic, if there exists $T\in\GL(n, \F)$, such that $\cA=T^t\cB 
T:=\{T^tBT: B\in\cB\}$. Theorem~\ref{thm:iso} then sets up another connection 
between graphs and 
alternating matrix spaces in the context of isomorphisms. In \cite{LQ17}, 
alternating matrix space isomorphism problem was 
studied as a linear algebraic analogue of graph isomorphism problem. The algorithm 
in \cite{LQ17} was recently improved in \cite{BGL+19}.

\paragraph{An implication: simplifying a reduction from graph isomorphism to group 
isomorphism.} To test whether two 
graphs are isomorphic is a celebrated algorithmic problem in computer science. 
Babai's recent breakthrough showed that the graph isomorphism problem can be 
solved in quasipolynomial time \cite{Bab16}. Babai proposed the group isomorphism 
problem as one of the next targets to study for isomorphism testing 
problems. 

The relations between graph isomorphism problem and group isomorphism problem are 
as 
follows. On the one hand, when groups are given by Cayley tables, group 
isomorphism 
reduces to 
graph isomorphism \cite{KST93}. On the other hand, graph isomorphism reduces to 
group 
isomorphism, when groups are given by generators as 
permutations \cite{Luks} or matrices over finite fields \cite{GQ19}. The reduction 
in \cite{GQ19} actually constructs $p$-groups of class $2$ and exponent $p$ from 
graphs.

The reduction in \cite{GQ19} relies a reduction from graph isomorphism to 
alternating bilinear map isomorphism. However, there some 
gadget is needed to restrict from invertible matrices to monomial matrices. 
Theorem~\ref{thm:iso} then 
implies that that gadget is actually not needed, considerably simplifying the 
reduction.

\paragraph{Structure of this note.} In the following, we shall first prove 
Proposition~\ref{prop:main} in Section~\ref{sec:iso}. We then discuss 
Definition~\ref{def:main} in Section~\ref{sec:hom}.

\section{Proof of Proposition~\ref{prop:main}}\label{sec:iso}

\paragraph{Further notations.} %Let $\F$ be a field. 
%Let $\M(n, \F)$ be the linear 
%space of $n\times n$ matrices 
%over $\F$. 
The symmetric group on a set $V$ is denoted by $\sym(V)$. For $n\in\N$, we 
let $\S_n=\sym([n])$. For $B\in \M(\ell\times n, \F)$, we use $B(i,j)$ to denote 
the $(i,j)$th entry of $B$. We use $e_i$ to denote the $i$th standard 
basis vector of $\F^n$. For a vector $w\in \F^n$, we use $w[i]$ to denote the 
$i$th entry of $w$. For a graph $H\subseteq\binom{[n]}{2}$, 
$\overline{H}=\binom{[n]}{2}\setminus H$ is the complement graph of $H$.

\paragraph{A reformulation of the problem.} 
Let $G, H\subseteq\binom{[n]}{2}$ be 
two graphs of size $m$. Recall that by Step~\ref{item:step1}, from $G$ and $H$ we  
construct $\bA_G$ and $\bA_H$ in $\Lambda(n, \F)^m$. Let $\cA_G$ and $\cA_H$ 
be subspaces of $\Lambda(n, \F)$ spanned by $\bA_G$ and $\bA_H$, respectively. 
Recall that $\cA_G$ and $\cA_H$ are isomorphic, if there exists $T\in\GL(n, \F)$, 
such 
that $\cA_G=T^t\cA_HT$. Clearly, $\cA_G$ and $\cA_H$ are isomorphic if and only if 
$\phi_G$ and $\phi_H$ are isomorphic. So we focus on $\cA_G$ and $\cA_H$ in the 
following. 

%We then need to show that if $\cA_G$ and $\cA_H$ are isomorphic, then $G$ and $H$ 
%are isomorphic. Suppose that $T\in \GL(n, \F)$ satisfies $\cA_G=T^t\cA_HT$.
%Let us examine what restrictions we have on $T$. Let the $i$th row of $T$ be 
%$w_i^t$, where $w_i\in\F^n$. 
%Suppose $\{k, l\}\in 
%H$. Then $T^t (e_k e_l^t-e_le_k^t) T= w_kw_l^t-w_lw_k^t$, which belongs to 
%$\cA_G$. Note that a matrix $B\in 
%\cA_G$ if and only if $\forall \{i, j\}\not\in G$, $B(i, j)=0$. Therefore, for 
%any 
%$\{k, l\}\in H$ and $\{i, j\}\not\in G$, $(w_kw_l^t-w_lw_k^t)(i, 
%j)=w_k(i)w_l(j)-w_l(i)w_k(j)=0$. 
%The above reasoning can be 
%reversed 
%to obtain 
We then need to show that if $\cA_G$ and $\cA_H$ are isomorphic, then $G$ and $H$ 
are isomorphic. Suppose that $T\in \GL(n, \F)$ satisfies $\cA_H=T^t\cA_GT$.
Let us examine what restrictions we have on $T$. Let the $i$th row of $T$ be 
$w_i^t$, where $w_i\in\F^n$. 
Suppose $\{i, j\}\in 
G$. Then $T^t (e_i e_j^t-e_je_i^t) T= w_iw_j^t-w_jw_i^t$, which belongs to 
$\cA_H$. Note that a matrix $B\in 
\cA_H$ if and only if $\forall \{k,l\}\not\in H$, $B(k,l)=0$. Therefore, for any 
$\{i,j\}\in G$ and $\{k,l\}\not\in H$, 
$(w_iw_j^t-w_jw_i^t)(k,l)=w_i[k]w_j[l]-w_j[k]w_i[l]=0$. 
This leads to the following observation. 
%
%Suppose $G=\{\{s_1, t_1\}, \dots, 
%\{s_m, t_m\}\}$, and $H=\{\{u_1, v_1\}, \dots, \{u_m, v_m\}\}$, where $s_i, t_j, 
%u_k, v_\ell\in[n]$. 
%
%Let $e_i$ be the $i$th standard basis vector of $\F^n$. Then $\cA_G$ is spanned 
%by 
%$e_{s_i}e_{t_i}^t-e_{t_i}e_{s_i}^t$, and $\cA_H$ is spanned by 
%$e_{u_i}e_{v_i}^t-e_{v_i}e_{u_i}^t$. Let the $i$th row of $T$ be $w_i^t$. Then 
%$T^t (e_{u_i}e_{v_i}^t-e_{v_i}e_{u_i}^t) T= w_{u_i}w_{v_i}^t-w_{v_i}w_{u_i}^t$. 
%Note that 
%$(w_{u_i}w_{v_i}^t-w_{v_i}w_{u_i}^t)(a, 
%b)=w_{u_i}(a)w_{v_i}(b)-w_{v_i}(a)w_{u_i}(b).$ 
%
%Observe that a matrix $B\in 
%\cA_G$, if and only if for any $\{a, b\}\not\in G$, $B(a, b)=0$. Since
%$w_{u_i}w_{v_i}^t-w_{v_i}w_{u_i}^t\in \cA_G$, it follows that 
%$w_{u_i}(a)w_{v_i}(b)-w_{v_i}(a)w_{u_i}(b)=0$. The above reasoning can be 
%reversed 
%to obtain this following observation. 
\begin{observation}\label{obs:main}
Let $\cA_G$ and $\cA_H$ be alternating matrix spaces in $\Lambda(n, \F)$ 
constructed from two graphs $G$ and $H$, respectively. Then $\cA_G$ and $\cA_H$ 
are isomorphic, if and only if there exists $T\in \GL(n, \F)$, such that for any 
$\{i,j\}\in G$ and $\{k, l\}\not\in H$, the $2\times 2$ submatrix of $T$ with row 
indices $\{i, j\}$ and column indices $\{k, l\}$ has the zero determinant. 
\end{observation}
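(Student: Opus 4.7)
\textbf{Proof plan for Observation~\ref{obs:main}.} The plan is to unpack the definition of isomorphism of alternating matrix spaces into an entrywise condition on a conjugating matrix $T$, and then recognise the relevant entries as $2\times 2$ minors of $T$. The discussion preceding the observation already contains the key computation; what remains is to assemble it into an iff statement and verify the (easy) dimension argument needed in the reverse direction.

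By definition, $\cA_G$ and $\cA_H$ are isomorphic iff there exists $T\in\GL(n,\F)$ with $\cA_H=T^t\cA_G T$. Writing the $i$th row of $T$ as $w_i^t$ and using $\cA_G=\linspan\{e_ie_j^t-e_je_i^t:\{i,j\}\in G\}$, I would first compute
\[
T^t(e_ie_j^t-e_je_i^t)T \;=\; w_iw_j^t-w_jw_i^t.
\]
Next I would use the characterisation that $B\in\Lambda(n,\F)$ lies in $\cA_H$ iff $B(k,l)=0$ for every $\{k,l\}\notin H$. Combining these, the containment $T^t\cA_G T\subseteq\cA_H$ is equivalent to
\[
w_i[k]w_j[l]-w_j[k]w_i[l]=0 \quad\text{for all }\{i,j\}\in G,\ \{k,l\}\notin H.
\]
Since $w_i[k]=T(i,k)$, the left-hand side is (up to sign) the determinant of the $2\times 2$ submatrix of $T$ with row indices $\{i,j\}$ and column indices $\{k,l\}$, which is exactly the condition in the statement. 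This delivers the forward direction verbatim and the backward direction up to an upgrade from containment to equality.

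For the backward direction I still need $T^t\cA_G T=\cA_H$, not just $\subseteq$. I would appeal to a dimension count: the elementary alternating matrices $A_{i,j}$ indexed by distinct pairs $\{i,j\}$ are linearly independent in $\Lambda(n,\F)$, so $\dim\cA_G=|G|=m=|H|=\dim\cA_H$, and invertibility of $T$ gives $\dim T^t\cA_G T=\dim\cA_G=m$. Hence containment forces equality, and one concludes $\cA_G\cong\cA_H$. I do not anticipate any substantive obstacle in this argument; the only point that requires a moment's care is this dimension upgrade, and it goes through as soon as one notes that $G$ and $H$ have the same number of edges (which is built into the setup of Proposition~\ref{prop:main}, where $\phi_G$ and $\phi_H$ share the same codomain $\F^m$).
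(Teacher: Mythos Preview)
Your proposal is correct and follows essentially the same route as the paper: the paragraph preceding the observation already carries out the computation $T^t(e_ie_j^t-e_je_i^t)T=w_iw_j^t-w_jw_i^t$ and reads off the entrywise vanishing condition as a $2\times 2$ minor of $T$, exactly as you do. The only addition in your write-up is that you make the backward direction explicit via the dimension count $\dim\cA_G=|G|=m=|H|=\dim\cA_H$, which the paper leaves to the reader; this is indeed the right way to close the argument, and it uses precisely the standing hypothesis $|G|=|H|=m$ set up at the start of Section~\ref{sec:iso}.
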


We then show that the existence of such $T$ 
would imply that the graphs $G$ and $H$ are isomorphic.

\begin{proposition}\label{prop:key}
Let $G, H\subseteq\binom{[n]}{2}$ be two graphs. Suppose there exists invertible 
matrix $T\in\GL(n, \F)$, such that for any 
$\{i,j\}\in G$ and $\{k,l\}\notin H$, 
$\det
\begin{bmatrix}
T(i,k) & T(i,l)\\
T(j,k) & T(j,l)
\end{bmatrix}=0$.
Then $G$ and $H$ are isomorphic.
\end{proposition}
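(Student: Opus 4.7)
I would first translate the hypothesis into algebraic form. Writing $w_i\in\F^n$ for the $i$-th row of $T$, the vanishing of each $2\times 2$ minor is equivalent to the rank-$2$ alternating matrix $w_iw_j^t-w_jw_i^t$ lying in $\cA_H$ for every $\{i,j\}\in G$; equivalently $T^t\cA_G T\subseteq\cA_H$. In the setting of Proposition~\ref{prop:main}, where Proposition~\ref{prop:key} is used, the two spaces have the same dimension, so this containment is in fact an equality.

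The strategy is then to locate the sought graph isomorphism inside the determinantal expansion
\[
\det T \;=\; \sum_{\sigma\in\S_n}\sgn(\sigma)\prod_{i=1}^{n} T(i,\sigma(i)).
\]
Call $\sigma$ a \emph{matching} if every $T(i,\sigma(i))\neq 0$, further call it \emph{good} if in addition $\{\sigma(i),\sigma(j)\}\in H$ for every $\{i,j\}\in G$ (so $\sigma$ is a graph isomorphism), and \emph{bad} otherwise. Since $\det T\neq 0$, at least one matching exists. The key observation is that bad matchings come in sign-canceling pairs: if $\{i,j\}\in G$ is a \emph{bad pair} for $\sigma$ (meaning $\{\sigma(i),\sigma(j)\}\notin H$), then the hypothesis applied to $(i,j)$ and $(\sigma(i),\sigma(j))$ yields
\[
T(i,\sigma(i))\,T(j,\sigma(j)) \;=\; T(i,\sigma(j))\,T(j,\sigma(i)),
\]
so $\tau:=\sigma\circ(i,j)$ is again a bad matching (sharing the bad pair $\{i,j\}$), satisfies $\prod_k T(k,\sigma(k))=\prod_k T(k,\tau(k))$, and has $\sgn(\tau)=-\sgn(\sigma)$; hence the contributions of $\sigma$ and $\tau$ cancel in $\det T$.

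What remains is to organize these cancellations into a sign-reversing involution $\iota$ on bad matchings, of the form $\iota(\sigma)=\sigma\circ(i^*,j^*)$ for a canonically chosen bad pair. Once such an $\iota$ is constructed, the bad contributions vanish pairwise and $\det T=\sum_{\sigma\text{ good}}\sgn(\sigma)\prod_i T(i,\sigma(i))$, so $\det T\neq 0$ forces the existence of a good matching, which is the desired $\pi$. Defining $\iota$ coherently is the main obstacle: a naive rule such as ``swap the lex-smallest bad pair'' can fail to be an involution, because a swap may introduce new, smaller bad pairs. I would address this in two stages. First, the analysis of isolated vertices underlying Observation~\ref{obs:main} — rows of $T$ indexed by $V(G)$ must be supported on columns indexed by $V(H)$, and invertibility together with $|G|=|H|$ then forces $|V(G)|=|V(H)|$ and a block decomposition of $T$ — reduces the problem to the case $V(G)=V(H)=[n]$. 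Second, in this reduced setting I would select the bad pair to swap via the lex-smallest \emph{image} $\{\sigma(i^*),\sigma(j^*)\}\in\overline{H}$ among the bad pairs of $\sigma$; the swap preserves this image by construction, so the chief remaining task is to use the full content of $T^t\cA_G T=\cA_H$ to rule out configurations in which the swap creates a new bad pair with a strictly smaller image. Establishing this stability is the technical heart of the argument.
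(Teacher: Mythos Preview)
Your overall strategy---expand $\det T$ over permutations, show that the ``bad'' ones cancel, hence a ``good'' $\sigma$ (a graph isomorphism) must survive---is exactly the right one, and it is also the skeleton of the paper's proof. The gap is that you do not actually carry out the cancellation. You propose the involution $\iota(\sigma)=\sigma\circ(i^*,j^*)$ with $\{i^*,j^*\}$ the bad pair of lex-smallest image, and then defer the crucial point (``rule out configurations in which the swap creates a new bad pair with a strictly smaller image'') to an unproved stability claim. This claim is the entire difficulty: a swap at $\{i,j\}$ leaves all pairs disjoint from $\{i,j\}$ unchanged, but for a pair $\{i,b\}\in G$ the image changes from $\{\sigma(i),\sigma(b)\}$ to $\{\sigma(j),\sigma(b)\}$, and nothing in the hypothesis prevents the latter from landing in $\overline H$ with a smaller lex rank. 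Invoking the \emph{equality} $T^t\cA_GT=\cA_H$ does not obviously block this either, and in any case Proposition~\ref{prop:key} only gives you the inclusion; the extra appeal to $|G|=|H|$ weakens what you are proving. (Your aside about ``the analysis of isolated vertices underlying Observation~\ref{obs:main}'' is also off---that observation is just the reformulation of the minor condition and says nothing about isolated vertices or block structure.)

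The paper sidesteps the involution problem entirely. Rather than pairing bad permutations two at a time, it groups them: to each bad matching $\sigma$ it associates a matrix $M\preceq T$ supported on rank-$1$ blocks $T(S_k,\sigma(S_k))$, where the $S_k$ are connected in $G$ and the $\sigma(S_k)$ connected in $\overline H$ (Definition~\ref{def:sem}). The $2\times2$ minor hypothesis is exactly what lets such rank-$1$ blocks be merged when they overlap (Claim~\ref{lem:unisqu}), so the maximal such matrices partition the nonzero terms of the Leibniz expansion (Lemmas~\ref{lem:divide} and~\ref{lem:maximal}); each maximal matrix has a block of size $\ge 2$ and rank $1$, hence determinant $0$. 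In effect, the paper replaces your hoped-for involution by a coarser equivalence relation whose classes are visibly singular, and the merging argument (the ``technical heart'') is what you would need to supply to make any involution-style argument go through.
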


By Observation~\ref{obs:main}, Proposition~\ref{prop:key} implies 
Proposition~\ref{prop:main}. We then focus on proving Proposition~\ref{prop:key} 
in 
the following.

\subsection{Proof of Proposition~\ref{prop:key}}

%Let us first  prepare for the proof of Theorem~\ref{thm:main}.
Given $\sigma\in \S_n$ and $M\in \M(n, \F)$, let $f(M,\sigma):= \text{sgn} 
(\sigma)\prod_{i=1}^nM(i,\sigma(i))$.
%, for a $n\times 
%n$ matrix $M$ and $n$-order permutation 
%$\sigma=(\sigma(1),\sigma(2),\dots,\sigma(n))$.\\
For $S\subseteq[n]$, $\sigma(S):=\{\sigma(x) : x\in S\}$. 
Given $A\subseteq[n]$ and $B\subseteq [n]$,
let $M(A,B)$ be the submatrix of $M$ with row indices in $A$ and column indices 
in $B$. 
%$\sigma(S):=\{\sigma(x)|x\in S\}$, for a set $S\subseteq\{1,2,\dots,n\}$ and 
%$n$-order permutation $\sigma$.\\

To prepare for the proof, we need the following definition. For $M,N\in\M(n, \F)$, 
define $M\preceq N$ if  $M(i,j)=0$ or $M(i,j)= N(i,j)$ for 
all $(i,j)$. In other words, $M\preceq N$ if the non-zero entries of $M$ are equal 
to the corresponding ones in $N$. We also use $M\prec N$ to denote that $M\neq N$ 
and $M\preceq N$. 

\begin{definition}\label{def:sem}
Let $G, H\subseteq \binom{[n]}{2}$ be two graphs. Let 
$T\in\GL(n, \F)$ such 
that for any 
$\{i,j\}\in G$ and $\{k,l\}\notin H$, 
$\det
\begin{bmatrix}
T(i,k) & T(i,l)\\
T(j,k) & T(j,l)
\end{bmatrix}=0$.
%
%Given two $n$-vertexes undirected graphs $G$ and $H$ and a $n\times n$ matrix 
%$T\in\mathbb{F}^{n\times n}$ that $
%\begin{vmatrix}
%T(i,k) & T(i,l)\\
%T(j,k) & T(j,l)
%\end{vmatrix}=0
%$ holds for all $(i,j)\in E_G$ and $(k,l)\notin E_H$. 
We define a subset of matrices, $\bK(G, H, T)\subseteq \M(n, \F)$, such that $M\in 
\bK(G, H, T)$, 
if there exists a 
permutation $\sigma\in \S_n$, and a disjoint partition of 
$[n]=\bigcup_{k=1}^r S_k$ such that:
\begin{enumerate}
\item $r<n$; 
\item every $S_k$ is connected in $G$; 
\item every $\sigma(S_k)$ is connected in $\overline{H}$;
\item $M\preceq T$; 
\item $M_{ij}\neq 0$ if and only if there exists $k\in[r]$, such that $i\in S_k$ 
and $j\in \sigma(S_k)$;
\item $\forall k\in[r]$, the rank of the submatrix $M(S_k,\sigma(S_k))$ is $1$. By 
(4), this implies that $T(S_k, \sigma(S_k))$ is of rank $1$.
\end{enumerate}
\end{definition}

Note that for any $M\in \bK(G, H, T)$, we have $\det(M)=0$. This is because
$$\det(M)=\pm\prod_{k=1}^r\det(M(S_k,\sigma(S_k))), $$
and since $r<n$, there exists $k\in[r]$ 
such that $|S_k|\ge 2$ with $\rk(M(S_k,\sigma(S_k)))=1$. 
%
%We are now ready to prove Proposition~\ref{thm:main}.
%
%\begin{proof}[Proof of Theorem~\ref{thm:main}]

We then state the following two lemmas, whose proofs will be postponed to the 
end of this subsection.

\begin{lemma}\label{lem:divide}
Given a matrix $T\in \M(n, \F)$, if there exists a set of matrices $\{T_1, 
T_2,\dots,T_s\}\subseteq \M(n, \F)$ such that 
\begin{enumerate}
\item $T_i\preceq T$ for all $i$, %\ynote{Is this condition necessary?},
\item for any $\sigma\in \S_n$, if $f(T,\sigma)\neq 0$ then there exists $i\in[s]$ 
such 
that $f(T_i,\sigma)=f(T,\sigma)$,
\item for any $\sigma\in \S_n$ and $i\neq j$, $f(T_i,\sigma)f(T_j,\sigma)=0$,
\end{enumerate}
then $\det(T)=\sum_{i=1}^s \det(T_i)$.
\end{lemma}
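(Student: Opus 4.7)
The plan is to reduce the identity to a pointwise comparison of Leibniz terms, indexed by permutations. Using $\det(M)=\sum_{\sigma\in\S_n} f(M,\sigma)$ for any $M\in\M(n,\F)$, I would rewrite
\[
\sum_{i=1}^s \det(T_i)=\sum_{\sigma\in\S_n}\sum_{i=1}^s f(T_i,\sigma),
\]
so that it suffices to prove the single-permutation identity $f(T,\sigma)=\sum_{i=1}^s f(T_i,\sigma)$ for every $\sigma\in\S_n$. I would then split into two cases according to whether $f(T,\sigma)$ vanishes.

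In the case $f(T,\sigma)=0$, some entry $T(k,\sigma(k))$ must vanish; since $T_i\preceq T$ forces $T_i(k,\sigma(k))\in\{0,T(k,\sigma(k))\}$, that entry is also $0$ in every $T_i$, so $f(T_i,\sigma)=0$ for all $i$ and both sides equal $0$. In the case $f(T,\sigma)\neq 0$, hypothesis (2) supplies an index $i^*$ with $f(T_{i^*},\sigma)=f(T,\sigma)\neq 0$, and then hypothesis (3) forces $f(T_j,\sigma)=0$ for every $j\neq i^*$; hence the right-hand sum collapses to the single term $f(T_{i^*},\sigma)=f(T,\sigma)$, as required. Summing over $\sigma$ then yields $\det(T)=\sum_{i=1}^s \det(T_i)$.

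I do not anticipate a real obstacle: the lemma is essentially bookkeeping on the Leibniz expansion, and the three hypotheses are precisely tailored to handle the three logical subcases (entry zero in $T$; entry nonzero and covered by a unique $T_{i^*}$; and no collisions between distinct $T_i$). The only subtlety worth flagging is that $\preceq$ is defined \emph{entrywise}, which is exactly what makes a zero entry of $T$ at position $(k,\sigma(k))$ automatically propagate to a zero of every $T_i$ at the same position; without this feature, the vanishing case would not go through and the identity could fail.
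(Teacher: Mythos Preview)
Your proposal is correct and follows essentially the same approach as the paper: both expand $\det$ via the Leibniz formula, swap the order of summation, and then use the three hypotheses to match the terms permutation by permutation. The only cosmetic difference is that you argue the pointwise identity $f(T,\sigma)=\sum_i f(T_i,\sigma)$ directly via a case split on whether $f(T,\sigma)=0$, while the paper phrases the same thing through the set equality $\{\sigma:\exists i,\,f(T_i,\sigma)\neq 0\}=\{\sigma:f(T,\sigma)\neq 0\}$; your use of condition~1 in the vanishing case is exactly the paper's argument for the containment $P\subseteq Q$.
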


We defined a set of matrices $\bK(G, H, T)$ in Definition~\ref{def:sem}. Note that 
$\bK(G, H, T)$ is a finite set, so there are maximal elements in it with respect 
to the  $\preceq$ order. We now 
show that the set of all maximal matrices in $\bK(G, 
H, T)$ satisfies the conditions of 
Lemma~\ref{lem:divide}.
\begin{lemma}\label{lem:maximal}
If $G$ is not isomorphic to $H$, then the set of all maximal matrices in $\bK(G, 
H, 
T)$ satisfies the conditions of Lemma~\ref{lem:divide}.
\end{lemma}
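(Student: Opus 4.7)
The plan is to verify the three conditions of Lemma~\ref{lem:divide} in turn for the set of $\preceq$-maximal matrices in $\bK(G,H,T)$. Condition~(1) is built into the definition of $\bK$ (item~4 of Definition~\ref{def:sem}) and so is immediate.

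For Condition~(2), I would fix $\sigma\in\S_n$ with $f(T,\sigma)\ne 0$, so that $T(i,\sigma(i))\ne 0$ for every $i$. Because $|G|=|H|$ (both graphs feed alternating bilinear maps into $\F^m$) and $G$ is not isomorphic to $H$, the permutation $\sigma$ fails to be a graph isomorphism from $G$ to $H$, and a cardinality count gives some $\{i,j\}\in G$ with $\{\sigma(i),\sigma(j)\}\notin H$. Form the structure whose blocks are the $2$-set $S_1=\{i,j\}$ together with singletons for the remaining $n-2$ vertices, with $\pi:=\sigma$. All clauses of Definition~\ref{def:sem} then hold: $S_1$ is $G$-connected, $\sigma(S_1)$ is $\overline{H}$-connected, the singleton blocks are trivial, $r=n-1<n$, and $T(\{i,j\},\{\sigma(i),\sigma(j)\})$ has rank exactly $1$ by the determinantal hypothesis on $T$ combined with the nonzero diagonal. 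Let $M_0$ be the resulting matrix and pick any $\preceq$-maximal $M\in\bK(G,H,T)$ with $M_0\preceq M$; since enlargements can only add nonzero entries, the whole $\sigma$-diagonal of $M$ survives and $f(M,\sigma)=f(T,\sigma)$.

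Condition~(3) is the main obstacle. Suppose, for contradiction, that two distinct maximal matrices $M,M'\in\bK(G,H,T)$ with structures $(\{S_k\},\pi)$ and $(\{S'_l\},\pi')$ both satisfy $f(M,\sigma),f(M',\sigma)\ne 0$. The requirement $M(i,\sigma(i))\ne 0$ for every $i$ places each $(i,\sigma(i))$ in a support rectangle, and bijectivity of $\sigma$ then forces $\sigma(S_k)=\pi(S_k)$ and $\sigma(S'_l)=\pi'(S'_l)$ for all $k,l$. Replacing $\pi,\pi'$ by $\sigma$ leaves the matrices and their rank-$1$ submatrices untouched, so both structures may be taken to use $\sigma$ as their permutation. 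To derive the contradiction, I would construct $M''\in\bK(G,H,T)$ whose support strictly contains that of $M$, violating maximality. The natural candidate is the structure $(\{T_l\},\sigma)$ where $\{T_l\}$ is the join partition---the equivalence classes on $[n]$ under the relation ``lying in a common $S_k$ or a common $S'_l$''. Items~2 and~3 of Definition~\ref{def:sem} for $\{T_l\}$ follow by concatenating $G$-paths (and $\overline{H}$-paths) drawn from the old blocks.

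The crux, and the step I expect to be hardest, is item~6: showing that each joined submatrix $T(T_l,\sigma(T_l))$ still has rank~$1$. The strategy is to propagate proportionality of rows along a chain of old blocks sharing elements, leveraging the rank-$1$ condition inside each old block together with the $T$-hypothesis, which supplies a fresh $2\times 2$ determinantal vanishing for every $G$-edge against every $\overline{H}$-edge. Controlling the ``crossed'' entries $T(a,\sigma(c))$ when $a,c$ lie in different old blocks, and the possible vanishing of normalising entries used to transfer proportionality, is the technical obstacle; the hypothesis $|G|=|H|$, which gives a symmetric dichotomy for how $\sigma$ fails to be an isomorphism (every missing $G$-edge on the source side is matched by a spurious $H$-edge on the target side, and vice versa), is likely to enter in an essential way in handling this propagation.
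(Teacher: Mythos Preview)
Your plan matches the paper's proof almost step for step: Condition~1 is immediate, Condition~2 uses the same $2$-block-plus-singletons construction (with $|G|=|H|$ to find an edge of $G$ whose $\sigma$-image is a non-edge of $H$), and Condition~3 proceeds by replacing both structural permutations with $\sigma$, forming the join of the two partitions, and checking that the joined blocks still satisfy Definition~\ref{def:sem}.

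Your one misstep is in the rank-$1$ propagation. The hypothesis $|G|=|H|$ plays \emph{no} role in Condition~3; it is used only in Condition~2. What actually makes the propagation work (this is the paper's Claim~\ref{lem:unisqu}) is that whenever $S_1\cap S_2\ne\emptyset$ with each $S_i$ connected in $G$, each $\sigma(S_i)$ connected in $\overline H$, and each submatrix $T(S_i,\sigma(S_i))$ of rank $1$ with \emph{all entries nonzero}, the same four properties hold for $S_1\cup S_2$. The argument is a double induction: fix $u\in S_1\cap S_2$ and, for $v\in S_1\cup S_2$ and $w\in\sigma(S_1\cup S_2)$, induct first on $d_G(u,v)$ and then on $d_{\overline H}(\sigma(u),w)$. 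At each inner step one has a $G$-edge $\{i,j\}$ and an $\overline H$-edge $\{k,l\}$ with three of the four entries $T(i,k),T(i,l),T(j,k),T(j,l)$ already known to be nonzero and in the correct ratio, and the determinantal hypothesis on $T$ forces the fourth to be nonzero and in ratio as well. In particular every entry of $T(S_1\cup S_2,\sigma(S_1\cup S_2))$ is nonzero, so your worry about ``vanishing normalising entries'' never materialises; the ``all entries nonzero'' clause is carried along as part of the induction hypothesis rather than patched afterwards.
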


Given these two lemmas, we can prove Proposition~\ref{prop:key} by way of 
contradiction. That is, 
%We shall prove the inverse negative propositions of Theorem~\ref{thm:main} by  
%contradiction. 
suppose $H$ is not isomorphic to $G$, but there exists $T\in\GL(n, 
\F)$, such that for any $(i,j)\in 
G$ and $(k,l)\notin H$, $\det
\begin{bmatrix}
T(i,k) & T(i,l)\\
T(j,k) & T(j,l)
\end{bmatrix}=0$. Let $\{T_1,T_2,\dots,T_m\}$ be the set of all maximal matrices 
in $\bK(G, H, T)$, then $\det(T)=\sum_{i=1}^m\det(T_i)$ by Lemma~\ref{lem:divide} 
and 
Lemma~\ref{lem:maximal}. Note that the determinant of any matrix in $\bK(G, H, 
T)$  
is 
$0$, so $\det(T)=0$. This contradicts with the fact that $T$ is invertible.

We now prove the two key lemmas. 
\begin{proof}[Proof of Lemma~\ref{lem:divide}]
The statement can be verified as follows.
\begin{align*}
\sum_{i=1}^s\det(T_i)=&\sum_{i=1}^s \sum_{\sigma\in \S_n} f(T_i,\sigma)\\
=&\sum_{\sigma\in \S_n} \sum_{i=1}^s f(T_i,\sigma)\\
=&\sum_{\sigma\in \S_n:\exists i,f(T_i,\sigma)\neq 0}\sum_{i=1}^sf(T_i,\sigma).
\end{align*}
Then by condition 3, for any $\sigma\in \S_n$ with $\exists 
i\in [s],f(T_i,\sigma)\neq 0$, there is actually a unique such $i$. We denote it 
by 
$i_\sigma$. Let $P=\{\sigma\in \S_n : \exists i\in[s], f(T_i, \sigma)\neq 0\}$. 
Let $Q=\{\sigma\in \S_n : f(T, \sigma)\neq 0\}$. By condition 2, we have 
$Q\subseteq P$. By condition 1, we have $P\subseteq Q$. So we can continue 
extending the equations from the above as 
follows. 
\begin{align*}
\sum_{i=1}^s\det(T_i)=&\sum_{\sigma\in P}f(T_{i_\sigma},\sigma)\\
=&\sum_{\sigma\in Q}f(T,\sigma) \\ %\tag{by condition 2}\\
=&\sum_{\sigma\in \S_n}f(T,\sigma)\\
=&\det(T). \qedhere
\end{align*}
\end{proof}

\begin{proof}[Proof of Lemma~\ref{lem:maximal}]
Let $\{T_1,T_2,\dots,T_m\}$ be the set of all maximal
matrices in $\bK(G, H, 
T)$. Let us verify the conditions in Lemma~\ref{lem:divide} one by one.
\begin{description}
\item[Condition 1:] $T_i\preceq T$ for all $i$. 

This follows directly from condition 4 in the definition of $\bK(G, H, T)$.
\item[Condition 2:] For any $\sigma\in \S_n$, if $f(T,\sigma)\neq 0$ then there 
exists 
$i$ that $f(T_i,\sigma)=f(T,\sigma)$.

Since
$G$ is not isomorphic to $H$, for any permutation $\sigma\in \S_n$ with 
$f(T,\sigma)\neq 0$, there exists $\{i,j\}\in\binom{[n]}{2}$ such that $\{i,j\}\in 
G$ 
and 
$\{\sigma(i),\sigma(j)\}\notin H$. This implies that 
$$\det\begin{bmatrix}
T(i,\sigma(i))&T(i, \sigma(j))\\
T(j,\sigma(i))&T(j,\sigma(j))
\end{bmatrix}=0,$$ 
that is,  
$T(i,\sigma(j))T(j,\sigma(i))=T(i,\sigma(i))T(j,\sigma(j))\neq 0$. Note that 
$T(i, \sigma(i)T(j,\sigma(j))\neq 0$, as $f(T, \sigma)\neq 0$. Construct a 
matrix $T'\in \M(n, \F)$, such that $T'$ coincide with $T$ in 
%such that $T'(k, \sigma(k))=T(k, \sigma(k))$ for $k\in[n]$, $T'(i, 
%\sigma(j))=T(i, \sigma(j)$, $T'(j, \sigma(i))=T(j, \sigma(i))$, and $T'(k, 
%\ell)=0$ 
these $n+2$ entries 
$$\{(1,\sigma(1)),\dots,(n,\sigma(n)),(i,\sigma(j)),(j,\sigma(i))\},$$ and then 
set 
other entries of $T'$ to be $0$. Then $T'\in \bK(G, H, T)$, and 
$f(T',\sigma)=f(T, \sigma)$. It follows that there exists $T_i$ 
such that $T_i$ is maximal in $\bK(G, H, T)$,
$T_i\succeq T'$, and it satisfies that $f(T_i,\sigma)=f(T,\sigma)$.

\item[Condition 3:] For any $\sigma\in \S_n$ and $i\neq j$, 
$f(T_i,\sigma)f(T_j,\sigma)=0$.

For the sake of contradiction, suppose there exist $\sigma\in \S_n$, and $i\neq 
j$, such that $f(T_i,\sigma)\neq 
0$ and $f(T_j,\sigma)\neq 0$. Since $T_i$ and $T_j$ satisfy 
Definition~\ref{def:sem}, there are partitions of $[n]$ associated with them, and 
we denote the partition of $[n]$ associated with $T_i$ (resp. $T_j$) by $U^{(i)}$ 
(resp. $U^{(j)}$). In the following we show how to construct $P\in \bK(G,H,T)$ 
such 
that $T_i\prec P$, arriving at the desired contradiction. 

Let 
$U$ be a set of subsets of $[n]$, obtained by taking the union of those subsets 
in $U^{(i)}$ and 
$U^{(j)}$. %, which is a set of subsets of $[n]$. 
Recall that for any $S\in U$, 
$S$ is connected in $G$, 
$\sigma(S)$ is 
connected in $\overline{H}$,  all elements in submatrix $T(S,\sigma(S))$ 
are 
nonzero, and the rank of $T(S,\sigma(S))$ is $1$. 

We now implement the following procedure, which transforms $U$ into a partition of 
$[n]$.
\begin{itemize}
\item As long as there exit $S_1, S_2\in U$ such that $S_1\cap 
S_2\neq\emptyset$, we perform one of the following: 
\begin{itemize}
\item If $S_1\subseteq S_2$, delete $S_1$ from $U$.
\item Else if $S_1\supseteq S_2$, delete $S_2$ from $U$.
\item Else delete $S_1$ and $S_2$ from $U$, and add  $S_1\cup S_2$ 
to $U$. 
\end{itemize}
\end{itemize}
%Repeat above procedure until elements in $S$ are disjoint. 
The key is that after each step, we can maintain the property that for any $S\in 
U$, $S$ is connected in $G$, 
$\sigma(S)$ is 
connected in $\overline{H}$,  all elements in the submatrix $T(S,\sigma(S))$ 
are 
nonzero, and the rank of $T(S,\sigma(S))$ is $1$. When $S_1\subseteq S_2$ or 
$S_1\supseteq S_2$, this is clear. We shall prove that this also holds in the last 
case in Claim~\ref{lem:unisqu}.
%
%When 
%By Lemma~\ref{lem:unisqu},  $S_{k_1}\cup S_{k_2}$ is connected in $G$, 
%$\sigma(S_{k_1}\cup S_{k_2})$ is connected in $\bar{H}$, and all elements in 
%$T(S_{k_1}\cup S_{k_2},\sigma(S_{k_1}\cup S_{k_2}))$ are nonzero and rank of 
%$T(S_{k_1}\cup S_{k_2},\sigma(S_{k_1}\cup S_{k_2}))$ is $1$. 

Therefore, after performing the above procedure, $U$ becomes a partition 
of $\{1,2,\dots,n\}$, and for any $S\in U$, all elements in the submatrix 
$T(S,\sigma(S))$ are nonzero, and the rank of $T(S,\sigma(S))$ is $1$. It 
follows that there exists $P\in \bK(G, H, T)$ corresponding to the partition $U$  
and $\sigma\in \S_n$, 
by 
Definition~\ref{def:sem}. It is clear that $T_i\prec P$, giving us the desired 
contradiction. \qedhere
\end{description}
\end{proof}

\begin{claim}\label{lem:unisqu}
Suppose we have two graphs $G, H\subseteq\binom{[n]}{2}$, $T\in \GL(n, \F)$, 
$\sigma\in \S_n$, and $S_1,S_2\subseteq[n]$, which satisfy the 
following conditions:
\begin{itemize}
\item[(a)] $\forall (i,j)\in G$ and $\forall (k,l)\in \overline{H}$, 
$\det
\begin{bmatrix}
T(i,k) & T(i,l)\\
T(j,k) & T(j,l)
\end{bmatrix}=0
$,
\item[(b)] $S_1\cap S_2\neq\emptyset$,
\item[(c)] $S_1$ and $S_2$ are both connected in $G$,
\item[(d)] $\sigma(S_1)$ and $\sigma(S_2)$ are both connected in $\overline{H}$,
\item[(e)] elements in the submatrix $T(S_1,\sigma(S_1))$ are all nonzero, and 
$\rk(T(S_1,\sigma(S_1)))=1$,
\item[(f)] elements in the submatrix $T(S_2,\sigma(S_2))$ are all nonzero,
and $\rk(T(S_2,\sigma(S_2)))=1$.
\end{itemize}
Then $S_1\cup S_2$ is connected in $G$, $\sigma(S_1\cup S_2)$ is connected in 
$\overline{H}$, all elements in the submatrix $T(S_1\cup S_2,\sigma(S_1\cup S_2))$ 
are 
nonzero, and $\rk(T(S_1\cup S_2,\sigma(S_1\cup S_2)))=1$.
\end{claim}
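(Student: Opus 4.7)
The plan is to handle the two connectedness claims and the rank-one claim separately. The connectedness parts will be immediate: by (b) and (c), $S_1$ and $S_2$ are $G$-connected sets sharing a vertex, so $G[S_1 \cup S_2]$ is connected; the same argument applied to $\sigma(S_1) \cup \sigma(S_2) = \sigma(S_1 \cup S_2)$ inside $\overline{H}$, using (d), gives the other connectedness claim.

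For the rank-one, nowhere-zero claim, I would first decompose each of the two known blocks as an outer product. By (e) and (f) we may write $T(i,k) = a_i b_k$ on $S_1 \times \sigma(S_1)$ and $T(i,k) = c_i d_k$ on $S_2 \times \sigma(S_2)$, with all of $a_i, b_k, c_i, d_k$ nonzero. Fix $i_0 \in S_1 \cap S_2$ and $k_0 \in \sigma(S_1 \cap S_2)$, which exist by (b); after rescaling $(c, d) \mapsto (\lambda c, \lambda^{-1} d)$ with a suitable $\lambda$, I arrange $a_{i_0} = c_{i_0}$ and $b_{k_0} = d_{k_0}$. The identity $a_i b_k = c_i d_k$ on the overlap $(S_1 \cap S_2) \times \sigma(S_1 \cap S_2)$ then forces $a_i = c_i$ for all $i \in S_1 \cap S_2$ and $b_k = d_k$ for all $k \in \sigma(S_1 \cap S_2)$, so $a$ and $c$ glue to a nowhere-zero function $e$ on $S_1 \cup S_2$, and $b$ and $d$ glue to a nowhere-zero function $f$ on $\sigma(S_1 \cup S_2)$. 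The equality $T(i,k) = e_i f_k$ will hold on the two original blocks by construction, reducing the claim to verifying the same equality on the two cross blocks.

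By symmetry I would treat only the cross block with $i \in S_1 \setminus S_2$ and $k \in \sigma(S_2) \setminus \sigma(S_1)$. Using (c), choose a path $i = v_t, v_{t-1}, \ldots, v_0$ in $G[S_1]$ with $v_0 \in S_1 \cap S_2$, and using (d) a path $k = w_r, w_{r-1}, \ldots, w_0$ in $\overline{H}[\sigma(S_2)]$ with $w_0 \in \sigma(S_1 \cap S_2)$. I then prove $T(v_s, w_u) = e_{v_s} f_{w_u}$ by induction on $s + u$: the base cases $s = 0$ or $u = 0$ lie in one of the two already-handled blocks, and in the inductive step condition (a) applied to the $G$-edge $\{v_{s-1}, v_s\}$ and the $\overline{H}$-edge $\{w_{u-1}, w_u\}$ yields
\[
T(v_{s-1}, w_{u-1})\, T(v_s, w_u) = T(v_{s-1}, w_u)\, T(v_s, w_{u-1}),
\]
and solving for $T(v_s, w_u)$ using the three known values on the other corners (with $T(v_{s-1}, w_{u-1}) = e_{v_{s-1}} f_{w_{u-1}} \neq 0$) gives exactly $e_{v_s} f_{w_u}$. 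Taking $(s, u) = (t, r)$ yields $T(i, k) = e_i f_k \neq 0$, as desired.

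The main obstacle is that the propagation must carry both the rank-one identity and the nonzero-ness simultaneously along both directions: the inductive step requires each of the three previously computed corners to be both correct and nonzero. This is precisely why the induction runs on the two-dimensional $(s, u)$ grid rather than along a single path, and why connectedness in both $G$ (to get the row path) and $\overline{H}$ (to get the column path) is indispensable.
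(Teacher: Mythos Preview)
Your proposal is correct and follows essentially the same approach as the paper: both arguments propagate the rank-one identity across the union by a double induction along $G$-edges (for rows) and $\overline{H}$-edges (for columns), using condition~(a) at each step to fill in the fourth corner of a $2\times 2$ minor. The only difference is organizational: the paper fixes a base vertex $u\in S_1\cap S_2$, sets $\lambda_v=T(v,\sigma(u))/T(u,\sigma(u))$, and runs a nested induction (outer on $d_G(u,v)$, inner on $d_{\overline H}(\sigma(u),w)$) over the entire block $(S_1\cup S_2)\times\sigma(S_1\cup S_2)$, whereas you first exploit (e) and (f) to write each known block explicitly as an outer product, glue them on the overlap, and then run a grid induction on $s+u$ only over the two off-diagonal cross blocks.
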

\begin{proof}
The connectivities of $S_1\cup S_2$ in $G$ and 
$\sigma(S_1\cup S_2)$ in $\overline{H}$ are evident by (c) and (d). We then focus 
on the last two 
properties regarding $T(S_1\cup S_2,\sigma(S_1\cup S_2))$ in the statement.

As $S_1\cap S_2\neq\emptyset$ by (b), there exists some $u\in S_1\cap S_2$. Fix 
one such $u\in S_1\cap S_2$. Our 
goal is to prove that for any $v\in S_1\cup S_2$, 
and $w\in \sigma(S_1\cup S_2)$, $T(v, w)=\lambda_v T(u, w)$ holds, for some 
$\lambda_v\neq 0\in \mathbb{F}$ that only depends on $v$. We shall 
achieve this by 
applying double inductions on 
the 
distance between $v$ and $u$ (the outer induction), and on the distance between 
$w$ and $\sigma(u)$ (the inner induction). 
%respectively to prove there exists constant 
%$\lambda_v\in \mathbb{F}$ that $T(v, w)=\lambda_v T(u, w)$ for all $v\in S_1\cup 
%S_2$ and $w\in \sigma(S_1\cup S_2)$. 
By conditions (e) and (f), and since $u\in S_1\cap S_2$, we have 
$T(v,\sigma(u))\neq 
0$ for any $v\in S_1\cup S_2$. So for any $v\in S_1\cup S_2$, we can set 
\begin{equation}\label{eq:lambda}
\lambda_v=T(v,\sigma(u))/ 
T(u,\sigma(u)).
\end{equation}
%; note that $u$ is a fixed element in $S_1\cap S_2$. 
Let $d_G(u,v)$ 
denote the 
distance from $u$ to $v$ in graph $G$.
\begin{description}
\item[The initial condition for the outer induction.] By condition (e) and (f), 
for any $w\in \sigma(S_1\cup S_2)$ we have 
$T(u, w)\neq 0$. So when $d_G(u,v)=0$, i.e. $u=v$, it trivially holds that for any 
$w\in \sigma(S_1\cup S_2)$, $T(u, w)=\lambda_v 
T(u, w)\neq 0$ where $\lambda_v=1$. 
\item[The outer inductive step.] Suppose $T(v, w)=\lambda_v T(u, w)\neq 0$ holds 
for 
any $v\in S_1\cup 
S_2$ satisfying that $d_G(u,v)\leq t_1$, and any $w\in \sigma(S_1\cup S_2)$. 
%Assuming when $d_G(u,v)\le t_1$, $T(v, w)=\lambda_v T(u, w)\neq 0$ holds for such 
%$v\in S_1\cup S_2$ and $w\in\sigma(S_1\cup S_2)$. 
Consider some vertex $j\in S_1\cup S_2$ with $d_G(u,j)=t_1+1$.
%for $j\in 
%S_1\cup S_2$, then 
Then there exists some other vertex $i\in S_1\cup S_2$ such that $\{i,j\}\in G$ 
and 
$d_G(u,i)= 
t_1$. We now prove that $T(j, w)=\lambda_j T(u, w)$ for any $w\in \sigma(S_1\cup 
S_2)$, by induction on $d_{\overline H}(\sigma(u), w)$.
\begin{description}
\item[The initial condition for the inner induction.] As the base case, suppose 
$d_H(\sigma(u), w)=0$, i.e. $w=\sigma(u)$. By 
conditions (e) and (f), $T(j,\sigma(u))\neq 0$, so $T(j,\sigma(u))=\lambda_j 
T(u,\sigma(u))\neq 0$ by the definition of $\lambda_j$ in Equation~\ref{eq:lambda}.
%when $d_{\bar{H}}(\sigma(u),l)=0$.
\item[The inner inductive step.] Suppose we have 
$T(j,k)=\lambda_j T(u,k)\neq 0$ for any $k\in \sigma(S_1\cup S_2)$ satisfying 
$d_{\overline H}(\sigma(u), k)\leq t_2$. Consider  $l\in \sigma(S_1\cup S_2)$ 
satisfying
$d_{\overline H}(\sigma(u),l)=t_2+1$. Then there exists 
$k\in \sigma(S_1\cup S_2)$ that $\{k,l\}\in \overline H$ and $d_{\overline 
H}(\sigma(u),k)= 
t_2$.
We have $\{i,j\}\in G$, $\{k,l\}\in \overline H$, $T(i,k)=\lambda_i T(u,k)\neq 0$ 
(by 
the inductive hypothesis on $d_G(u,i)$), 
$T(i,l)=\lambda_i T(u, l)\neq 0$ (by the inductive hypothesis on $d_G(u,i)$), and 
$T(j,k)=\lambda_j T(u,k)\neq 0$ (by the inductive hypothesis on $d_H(\sigma(u), 
k)$). It follows 
from 
condition (a) that $T(j,l)=\lambda_j T(u, l)\neq 0$.
\item [Concluding the inner induction.] By the above, we have that  
$T(j, w)=\lambda_j T(u, w)$ for 
any $w\in \sigma(S_2)$. So for any $w\in \sigma(S_1\cup S_2)$, $T(j, 
w)=\lambda_{j} 
T(u, w)\neq 0$ when $d_G(u,j)= t_1+1$. 
\end{description}
\item[Concluding the outer induction.]
By the above, we have that $T(v, w)=\lambda_{v} T(u, w)\neq 0$ for any 
$v\in S_1\cup S_2$ and $w\in \sigma(S_1\cup S_2)$.\qedhere
\end{description}
\end{proof}

\section{Discussions on pullback homomorphisms}\label{sec:hom}

\paragraph{A functor based on the Baer-Lov\'asz-Tutte construction.} Let $X$ be a 
finite 
set. A (simple and undirected) graph $G=(X, E)$ consists of a vertex set $X$ and 
an edge set $E\subseteq\binom{X}{2}$. Pullback homomorphisms as in 
Definition~\ref{def:main} can be easily adapted to accommodate graphs $G=(X, E)$ 
and $H=(Y, F)$ with arbitrary vertex sets. 

%Also note that the 
%Baer-Lov\'asz-Tutte procedure can be applied for graphs $G=(V, E)$ on any vertex 
%set $V$. We then present some 
%basic observations. 

Let us also present a variant of 
the Baer-Lov\'asz-Tutte construction for graphs on arbitrary vertex sets. Note 
that a little 
twist is needed so that this construction does not require total orders on 
the vertex and edge sets. 

Let $X$ be a finite set. Given a field $\F$, we use $\F X$ to denote the 
$\F$-vector space spanned by elements in $X$.
%, and $\F^X$ to denote the 
%$\F$-vector space of (total) functions from $X$ to $\F$. 
For $v\in \F X$ and $x\in 
X$, we use $v[x]$ to denote the coefficient of $x$ in $v$. 
%For $x\in X$, the 
%indicator function $\hat x: X\to \{0, 1\}$ is defined as $\hat x(x')=1$ if and 
%only if $x'=x$. It further induces a linear function on $\F X$, also denoted as 
%$\hat x$, by $\hat x(v)=v[x]$.

Suppose we have a graph $G=(X, E)$. For any $E\subseteq\binom{X}{2}$, let $\tilde 
E=\{(x, x')\in X\times X : \{x, x'\}\in E\}$. 
%For a vector space $V$ over a field 
%$\F$, let 
%$\Lambda(V)$ be the space of alternating bilinear forms on $V$. 
For $(x,x')\in X\times 
X$, the 
elementary alternating bilinear form $A_{(x,x')}:\F X\times\F X\to\F$ is defined 
as $A_{(x,x')}(u, v)=u[x]v[x']-u[x']v[x]$. Then an alternating 
bilinear 
map 
$\phi_G:\F X\times \F X\to \F\tilde E$ can be defined as $(\phi_G(u, 
v))[(x,x')]=A_{(x,x')}(u, v)$.
%can be defined by setting the coefficient of 
%$(x,x')\in \tilde E$ to be $A_{(x,x')}(f,g)$. 
Note that 
$\dim(\im(\phi_G))=|E|$, as $A_{(x, 
x')}(u,v)=-A_{(x',x)}(u,v)$ for any $u, v\in \F X$. We then apply Baer's 
construction %, which works for any alternating bilinear maps over $\F_p$, to 
to $\phi_G:\F_p X\times \F_p X\to \im(\phi_G)$ 
to obtain a $p$-group of class $2$ and exponent $p$ $P_G$. In particular, the set 
of 
group elements of $P_G$ is $\F_p X\times\im(\phi_G)$.
%More specifically, the set 
%of group elements of $P_G$ is $\F^X\times \im(A_G)$, 

We record a basic property of pullback homomorphisms in the following fact.

\begin{fact}\label{fact:hom}
Let $G_i=(X_i, E_i)$, $i\in[3]$, be three graphs. Let $f_j:X_j\to 
X_{j+1}$, $j\in[2]$, be pullback homomorphisms from $G_j$ to $G_{j+1}$. 
Then  $f_2\circ f_1:X_1\to X_3$ is a pullback homomorphism from $G_1$ to $G_3$.
\end{fact}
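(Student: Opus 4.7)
The plan is to verify the two defining properties of a pullback homomorphism for the composition $f_2 \circ f_1$: that it is an injective partial function, and that it satisfies the pullback condition on edges. Both follow from basic manipulations, but a little care is required because we are composing partial functions, and we must be explicit about where things are defined.

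First I would recall that the composition of two injective partial functions is an injective partial function, with domain of definition $\D(f_2\circ f_1)=\{x\in\D(f_1):f_1(x)\in\D(f_2)\}$. Injectivity of $f_2\circ f_1$ follows immediately from injectivity of each $f_j$.

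Next, for the pullback condition, I would take an arbitrary $\{i,j\}\in\binom{X_1}{2}$ with $\{(f_2\circ f_1)(i),(f_2\circ f_1)(j)\}\in E_3$. By the remark right after Definition~\ref{def:main}, this already forces $(f_2\circ f_1)(i)$ and $(f_2\circ f_1)(j)$ to be defined, so $i,j\in \D(f_2\circ f_1)$ and both $f_1(i),f_1(j)\in\D(f_2)$. Since $f_1$ is an injective partial function and $i\ne j$, we have $f_1(i)\ne f_1(j)$, so $\{f_1(i),f_1(j)\}\in\binom{X_2}{2}$ is a genuine 2-element subset. Applying the pullback property of $f_2$ from $G_2$ to $G_3$ to the pair $\{f_1(i),f_1(j)\}$ yields $\{f_1(i),f_1(j)\}\in E_2$. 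Then applying the pullback property of $f_1$ from $G_1$ to $G_2$ to $\{i,j\}$ yields $\{i,j\}\in E_1$, which is exactly what we needed.

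The main ``obstacle'', such as it is, is purely bookkeeping: one must check that whenever the hypothesis $\{(f_2\circ f_1)(i),(f_2\circ f_1)(j)\}\in E_3$ holds, both intermediate values $f_1(i),f_1(j)$ are defined and distinct, so that the 2-set $\{f_1(i),f_1(j)\}$ is a legitimate input to the pullback condition of $f_2$. Once this is observed, the argument reduces to two successive applications of the defining implication. This also matches the graph-theoretic interpretation given after Definition~\ref{def:main}: an isomorphic copy of $G_3[\im(f_2\circ f_1)]\subseteq G_3[\im(f_2)\cap \im(f_2\circ f_1)]$ sits inside $G_2[\D(f_2)\cap \im(f_1)]$ via $f_2^{-1}$, which in turn pulls back via $f_1^{-1}$ to a subgraph of $G_1[\D(f_2\circ f_1)]$.
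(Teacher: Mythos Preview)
Your proof is correct. The paper does not give a detailed proof of this fact at all; it merely remarks, right after Definition~\ref{def:main}, that the graph-theoretic interpretation (an isomorphic copy of $H[\im(f)]$ sitting inside $G[\D(f)]$) makes it ``easy to verify that a composition of two homomorphisms is again a homomorphism,'' and then records the statement as Fact~\ref{fact:hom} without further argument. Your write-up supplies exactly the bookkeeping the paper omits---checking definedness and distinctness of the intermediate values so that the pullback condition can be applied twice---and your closing paragraph is precisely the graph-theoretic reading the paper alludes to.
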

%\begin{proof}
%Let $f=f_2\circ f_1:X_1\to X_3$. For $\{x, x'\}\in E_1$, if $\{f(x), 
%f(x')\}\in E_3$, then it must be the case that $x$ and $x'$ are defined in $f_1$, 
%and $f_1(x)$ and $f_1(x')$ are defined in $f_2$. Furthermore, $\{f(x), f(x')\}\in 
%E_3\iff \{f_2(f_1(x)), f_2(f_1(x'))\}\in E_3$, which, by $f_1$ and $f_2$ being 
%pullback homomorphisms, implies $\{f_1(x), f_1(x')\}\in E_2$ and then $\{x, 
%x'\}\in E_1$. This shows that $f$ is a pullback homomorphism from $G_1$ to $G_3$. 
%\end{proof}

Fact~\ref{fact:hom} suggests the following category of graphs 
$\catgraphpull$, where $\leftarrow$ denotes ``pullback.'' In $\catgraphpull$, the 
objects are graphs of the form $G=(V, E)$, the morphisms are pullback 
homomorphisms, and the identity homomorphism on $G=(V, E)$ is the identity map 
from $V$ to $V$. The identity axiom and the associativity axiom hold trivially.

Let $\catgroup$ be the category of groups. The Baer-Lov\'asz-Tutte construction 
then naturally leads to the following functor from $\catgraphpull$ 
to $\catgroup$.

\begin{definition}
Let $\BLTfunctor$ be a pair of maps from objects and morphisms in $\catgraphpull$ 
to those in $\catgroup$ as follows.
\begin{itemize}
\item For $G\in \catgraphpull$, set $\BLTfunctor(G)$ to be the $p$-group of class 
$2$ and exponent $p$ via the Baer-Lov\'asz-Tutte construction. 
\item Given two graphs %$G_1=(X_1, E_1)$ and $G_2=(X_2, E_2)$, 
$G=(X, E)$ and $H=(Y, F)$, 
let %$f:X_1\to X_2$ 
$f:X\to Y$
be a pullback homomorphism. As $f$ is injective, $f$ induces an injective 
partial function 
$\tilde f:\tilde E\to \tilde F$, %\tilde E_1\to \tilde E_2$, 
in which $(x, x')\in \tilde E$ %\tilde E_1$ 
is defined in 
$\tilde f$ if and only if both $x$ and $x'$ are defined in $\tilde f$. 
Furthermore, 
$f$ induces a linear map $\ell: \F_p X\to \F_p Y$ %\F_p X_1\to \F_p X_2$ 
as follows: for 
%$u\in \F_p X_1$ 
$u\in \F_p X$
and $y\in Y$, %$x\in X_2$, 
if $y\not\in\im(f)$, then $(\ell(u))[y]=0$. Otherwise, 
$(\ell(u))[y]=u[f^{-1}(y)]$.
%$\phi$ induces $\tilde 
%\phi:\F^{X_2}\to \F^{X_1}$ as follows: for $f:X_2\to \F$ and $x_1\in X_1$, if 
%$x_1$ is not defined in $\phi$, then $(\tilde\phi(f))(x_1)=0$. Otherwise, 
%$(\tilde\phi(f))(x_1)=f(\phi(x_1))$. 
Analogously, $\tilde f$ induces a linear map $\tilde \ell:
%\F_p\tilde E_1 \to \F_p\tilde E_2$ 
\F_p\tilde E\to \F_p\tilde F$ 
as follows: for $v\in \F_p\tilde E$ and %E_1$ and 
$e\in \tilde F$, %E_2$, 
if $e\not\in \im(\tilde f)$, then 
$(\tilde \ell(v))[e]=0$. Otherwise, $(\tilde\ell(v))[e]=v[\tilde f^{-1}(e)]$. 
Clearly, $\tilde\ell$ sends $\im(\phi_{G})$ to $\im(\phi_{H})$. We then set 
$\BLTfunctor(f):\BLTfunctor(G)\to\BLTfunctor(H)$ as sending $(u, v)\in 
\F_p X\times \im(\phi_{G})$ to $(\ell(u), \tilde\ell(v))\in \F_p Y\times 
\im(\phi_{H})$.
\end{itemize}
\end{definition}

We first verify that $\BLTfunctor(f)$ is a group homomorphism. To see this, 
by \cite[Sec. 3.4]{Wil09a}, it is enough to verify that $(\ell, 
\tilde\ell)$ is an alternating bilinear map homomorphism from $\phi_{G}$ to 
$\phi_{H}$. To see this, note that for any $u, u'\in \F X$ and $(y, y')\in 
Y\times Y$, we have the following.

\begin{itemize}
\item Suppose both $y, y'$ are in $\im(f)$. On the one hand, we have 
$$\big(\tilde\ell(\phi_{G}(u, u'))\big)[(y, 
y')]=u[f^{-1}(y)]\cdot u'[f^{-1}(y')]-u[f^{-1}(y')]\cdot u[f^{-1}(y)].$$
On the 
other hand,
\begin{eqnarray*}
(\phi_{H}(\ell(u), \ell(u')))[(y, y')] & = & 
(\ell(u))[y]\cdot (\ell(u'))[y']-(\ell(u))[y']\cdot (\ell(u'))[y] \\
& = & u[f^{-1}(y)]\cdot u'[f^{-1}(y')]-u[f^{-1}(y')]\cdot u'[f^{-1}(y)].
\end{eqnarray*}
\item Suppose one of $y, y'$ are not in $\im(f)$. Then on the one hand, 
$\big(\tilde\ell(\phi_{G}(u, u'))\big)[(y, y')]=0$. On the other hand, 
$(\phi_{H}(\ell(u), \ell(u')))[(y, y')]$ is also $0$, as either 
$(\ell(u))[y]=(\ell(u'))[y]=0$, or 
$(\ell(u))[y']= (\ell(u'))[y']=0$.
\end{itemize}

We then verify that $\BLTfunctor$ forms a functor from the category of graphs 
with pullback homomorphisms to the category of $p$-groups of class $2$ and 
exponent $p$. For this, we need to verify that for two pullback homomorphisms 
$f, g: X\to Y$, $\BLTfunctor$ satisfies that $\BLTfunctor(f\circ 
g)=\BLTfunctor(f)\circ \BLTfunctor(g)$. To see this, we only need to 
verify that the composition of partial injective functions is preserved in the 
composition of linear maps, and this is straightforward. 

\paragraph{Algorithmic aspects of pullback homomorphisms.} Let $G=(X, E)$ and 
$H=(Y, F)$ be 
two graphs. Suppose $f:X\to Y$ is a pullback homomorphism. Let $K=H[\im(f)]$. We 
define the order (resp. size) of $f$ as the order (resp. size) of $K$. 
We can rephrase several classical algorithmic problem in terms of pullback 
homomorphisms by restricting to various classes of $G$, $H$, and $f$. 
\begin{itemize}
\item Suppose $H$ is the graph consisting of a perfect matching of large enough 
order. Then 
asking to maximize the size over all pullback homomorphisms is the maximum 
matching problem on $G$. 
\item Suppose $H$ is the complete graph of large enough order. Then asking to 
maximize the order over 
all pullback homomorphisms is the maximum clique problem on $G$. 
\item Suppose $G$ is the empty graph of large enough order. Then asking to 
maximize the order over all pullback homomorphisms is the independent set problem 
on $H$.
\item If $f$ is surjective, then the corresponding algorithmic problem is the 
well-known subgraph isomorphism problem, asking whether $H$ is isomorphic to a 
subgraph of $G$. 
\end{itemize}

\paragraph{Acknowledgement.} We thank Xiaoming Sun and Jialin Zhang for initial 
discussions which lead to the proof of Theorem~\ref{thm:iso}.

\bibliographystyle{alpha}
\bibliography{references}

\newcommand{\etalchar}[1]{$^{#1}$}
\begin{thebibliography}{BCG{\etalchar{+}}20}

\bibitem[Bab16]{Bab16}
L{\'{a}}szl{\'{o}} Babai.
\newblock Graph isomorphism in quasipolynomial time [extended abstract].
\newblock In {\em Proceedings of the 48th Annual {ACM} {SIGACT} Symposium on
  Theory of Computing, {STOC} 2016, Cambridge, MA, USA, June 18-21, 2016},
  pages 684--697, 2016.
\newblock arXiv:1512.03547, version 2.

\bibitem[Bae38]{Bae38}
Reinhold Baer.
\newblock Groups with abelian central quotient group.
\newblock {\em Transactions of the American Mathematical Society},
  44(3):357--386, 1938.

\bibitem[BCG{\etalchar{+}}20]{BCG+19}
Xiaohui Bei, Shiteng Chen, Ji~Guan, Youming Qiao, and Xiaoming Sun.
\newblock From independent sets and vertex colorings to isotropic spaces and
  isotropic decompositions: Another bridge between graphs and alternating
  matrix spaces.
\newblock In {\em 11th Innovations in Theoretical Computer Science Conference,
  {ITCS} 2020, January 12-14, 2020, Seattle, Washington, {USA}}, pages
  8:1--8:48, 2020.

\bibitem[BGL{\etalchar{+}}19]{BGL+19}
Peter~A. Brooksbank, Joshua~A. Grochow, Yinan Li, Youming Qiao, and James~B.
  Wilson.
\newblock Incorporating weisfeiler-leman into algorithms for group isomorphism,
  2019.
\newblock arXiv:1905.02518.

\bibitem[GQ19]{GQ19}
Joshua~A. Grochow and Youming Qiao.
\newblock Isomorphism problems for groups, 3-tensors, and cubic forms:
  completeness and reductions, 2019.
\newblock arXiv:1907.00309.

\bibitem[HN04]{HN04}
Pavol Hell and Jaroslav Ne\v{s}et\v{r}il.
\newblock {\em Graphs and homomorphisms}.
\newblock Number~28 in Oxford Lecture Series in Mathematics and Its
  Applications. Oxford University Press, 2004.

\bibitem[KST93]{KST93}
Johannes K\"{o}bler, Uwe Sch\"{o}ning, and Jacobo Tor\'{a}n.
\newblock {\em The graph isomorphism problem: its structural complexity}.
\newblock Birkhauser Verlag, Basel, Switzerland, Switzerland, 1993.

\bibitem[Lov79]{Lov79}
L{\'{a}}szl{\'{o}} Lov{\'{a}}sz.
\newblock On determinants, matchings, and random algorithms.
\newblock In {\em Fundamentals of Computation Theory, {FCT} 1979, Proceedings
  of the Conference on Algebraic, Arthmetic, and Categorial Methods in
  Computation Theory, Berlin/Wendisch-Rietz, Germany, September 17-21, 1979},
  pages 565--574, 1979.

\bibitem[Lov89]{Lov89}
L{\'a}szl{\'o} Lov{\'a}sz.
\newblock Singular spaces of matrices and their application in combinatorics.
\newblock {\em Boletim da Sociedade Brasileira de
  Matem{\'a}tica-Bulletin/Brazilian Mathematical Society}, 20(1):87--99, 1989.

\bibitem[LQ17]{LQ17}
Yinan Li and Youming Qiao.
\newblock Linear algebraic analogues of the graph isomorphism problem and the
  {Erd{\H{o}}s}--{R{\'{e}}nyi} model.
\newblock In Chris Umans, editor, {\em 58th {IEEE} Annual Symposium on
  Foundations of Computer Science, {FOCS} 2017, Berkeley, CA, USA, October
  15-17, 2017}, pages 463--474. {IEEE} Computer Society, 2017.

\bibitem[LQ19]{LQ19}
Yinan Li and Youming Qiao.
\newblock Group-theoretic generalisations of vertex and edge connectivities,
  2019.
\newblock arXiv:1906.07948.

\bibitem[Luk93]{Luks}
Eugene~M. Luks.
\newblock Permutation groups and polynomial-time computation.
\newblock In {\em Groups and computation ({N}ew {B}runswick, {NJ}, 1991)},
  volume~11 of {\em DIMACS Ser. Discrete Math. Theoret. Comput. Sci.}, pages
  139--175. Amer. Math. Soc., Providence, RI, 1993.

\bibitem[Tut47]{Tut47}
W.~T. Tutte.
\newblock The factorization of linear graphs.
\newblock {\em Journal of the London Mathematical Society}, s1-22(2):107--111,
  1947.

\bibitem[Wil09]{Wil09a}
James~B. Wilson.
\newblock Decomposing {$p$}-groups via {J}ordan algebras.
\newblock {\em J. Algebra}, 322:2642--2679, 2009.

\end{thebibliography}

\end{document}